\newtheorem{theorem}{Theorem}[section]
\newtheorem{corollary}[theorem]{Corollary}
\newtheorem{lemma}[theorem]{Lemma}
\newtheorem{proposition}[theorem]{Proposition}
\newtheorem{conjecture}[theorem]{Conjecture}
\theoremstyle{definition}
\newtheorem{definition}[theorem]{Definition}
\newtheorem{remark}[theorem]{Remark}
\newdimen\pIR
\newcommand\StevesR{{\rm I\kern\pIR R}}
\def\pleq{\preccurlyeq}
\def\bvec#1{{\mbox{\boldmath $#1$}}}
\def\prob#1#2{{\mathbb{P}}_{#1}\left[ #2 \right]}
\newcommand\symExpec{\operatorname{\mathbb{E}}\displaylimits}
\def\expec#1#2{\symExpec_{#1}  #2 }
\newcommand{\E}{\mbox{{\bf E}}}
\def\defeq{\stackrel{\mathrm{def}}{=}}
\def\setof#1{\left\{#1  \right\}}
\def\trace#1{\mathrm{Tr} \left(#1 \right)}
\def\abs#1{\left|#1  \right|}
\def\norm#1{\left\| #1 \right\|}
\def\Complex#1{\mathbb{C}^{#1}}
\newcommand{\imag}[1]{\mathsf{Im} (#1)}
\def\Branden{Br\"{a}nd\'{e}n}
\newcommand{\C}{\mathbb{C}}
\newcommand{\N}{\mathbb{N}}
\newcommand{\R}{\mathbb{R}}
\newcommand{\AND}{\quad \text{and} \quad}
\newcommand{\mydet}[1]{\det\left(#1\right)}
\newcommand{\charp}[2]{\chi \left[#1 \right] \left(#2 \right)}
\newcommand{\mixed}[2]{\mu \left[#1 \right] \left(#2 \right)}
\def\above#1{\mathsf{Ab}_{#1}}
\newcommand{\tr}{\mathrm{Tr}}
\newcommand{\s}{\mathcal{S}}
\newcommand{\A}{\mathcal{A}}
\newcommand{\diag}{\mathrm{diag}}
\newtheorem{example}{Example}
\renewcommand{\P}{\mathbb{P}}
\title{The Solution of The Kadison-Singer Problem}
\author{Adam Marcus\footnote{Princeton University,
\texttt{adam.marcus@princeton.edu}, supported by NSF grant DSM-1552520.} 
\and Nikhil Srivastava\footnote{UC Berkeley,
\texttt{nikhil@math.berkeley.edu}, supported by NSF grant
CCF-1553751 and a Sloan research fellowship.}}
\date{\today}
\begin{document}
\maketitle
\tableofcontents

\begin{abstract}
These lecture notes are meant to accompany two lectures given at the CDM 2016
conference, about the Kadison-Singer Problem. They are meant to complement the
survey by the same authors (along with Spielman) which appeared at the 2014 ICM.
In the first part of this survey we will introduce the Kadison-Singer problem
from two perspectives ($C^*$ algebras and spectral graph theory) and present some
examples showing where the difficulties in solving it lie. In the second part we
will develop the framework of interlacing families of polynomials, and show how
it is used to solve the problem. None of the results are new, but we have added
annotations and examples which we hope are of pedagogical value.
\end{abstract}

\section{The Kadison-Singer Problem}
Since the Kadison-Singer Problem is a question in $C^*$ algebras, we begin by
recalling some basic definitions from that subject. Let $B(\ell_2)$ denote the
algebra of bounded operators on the complex Hilbert space $\ell_2(\N)$.  Such
operators may be identified with infinite dimensional matrices with bounded
operator norm $$\|M\|:=\sup_{x\in\ell_2}\|Mx\|.$$ For our purposes, a {\em $C^*$
algebra} is a subalgebra $\A$ of $B(\ell_2)$ which is closed in the operator
norm topology, closed under taking adjoints (hence the $C^*$), and contains the
identity. The most important example of a $C^*$ algebra in the present context
is $D(\ell_2)$, the algebra of bounded diagonal operators on $\ell_2$, which may
be identified with infinite diagonal matrices with bounded entries. Note
that $D(\ell_2)$ is a {\em maximal abelian subalgebra} of $B(\ell_2)$. 

Operator algebras were originally introduced by von Neumann as a rigorous
mathematical framework for quantum mechanics, in which bounded self-adjoint
operators play the role of physical observables (such as position, momentum,
energy). Without making a full
digression into quantum theory, we remark that the physical relevance of abelian
subalgebras of $B(\ell_2)$ is that they are generated by observables which {\em
commute}, implying that they can be measured simultaneously without being
constrained by an uncertainty principle. The physical question that motivated
the Kadison-Singer problem is roughly this: 
\begin{quote} Given a quantum system (such as an electron in a hydrogen atom) 
does knowing the outcomes of all measurements with respect to
a maximal set of commuting observables (such as the quantum numbers $n,\ell, m,
s$) {\em uniquely} determine the outcomes of all possible measurements of all
possible observables?
\end{quote}
The above is not meant to be mathematically rigorous, and we have left words
such as ``outcome'' deliberately undefined, but we remark that such an
assertion, interpreted appropriately, was believed to be true by Dirac .

We need one more notion to arrive at a mathematically precise formulation of the
question. A {\em state} on a $C^*$ algebra $\A$ is a linear functional
$\phi:\A\rightarrow\C$ with two properties: (a) $\phi(I)=1$; (2) $\phi(M^*M)\ge
0$ for every $M\in\A$. It is easy to check that the set of states on $\A$ is
convex and compact in the $w^*$ topology; let us call this set $\s(A)$. By the
Krein-Milman theorem, $\s(\A)$ is the convex hull of its extreme points, which
are the {\em pure states} on $\A$. States are supposed to correspond to physical
states of a quantum system. The only other facts we will use about states are that
they satisfy the Cauchy-Schwartz inequality:
$$|\phi(MN)|^2\le \phi(M^*M)\phi(N^*N)$$
and that $\phi(M)\le \|M\|$.

The most familiar examples of states come from unit vectors: given
any $\xi\in\ell_2$ with $\|\xi\|=1$, it is easy to see that $\rho(M):=\langle
\xi, M\xi\rangle$ satisfies (a) and (b). In finite dimensions, one can easily
show using elementary linear algebra that these are the only pure states on $B(\C^n)$. 
This is not at all the case in infinite dimensions.

The Kadison-Singer Problem asks:
\begin{quote}
	Does every pure state on $D(\ell_2)$ have a unique extension to a state
	on $B(\ell_2)$?
\end{quote}

If one restricts attention to {\em vector} pure states then the answer to the
KSP is easily seen to be yes. The difficulty stems from the fact that the set of
all pure states on $D(\ell_2)$ is substantially more complicated than this. For instance,
one can take limits of pure states with respect to non-principal ultrafilters to
produce pure states which are very different from vector states, and rather
inaccessible in concrete terms.\footnote{One could argue that this is physically
irrelevant, since non-principal ultrafilters require the axiom of choice and
cannot arise in any physical situation, and one would be right. The question
above is really a question about a certain mathematical apparatus around quantum
mechanics, rather than about the physical world itself.}

In their original paper Kadison and Singer outlined an elegant approach to proving the conjecture
without having to say too much about $\s(D(\ell_2))$. The starting point is to
observe that pure states are necessarily very well-behaved on a particular class
of operators in $D(\ell_2)$, namely {\em diagonal projections}. 
\begin{lemma}\label{lem:diagproj} If $P\in D(\ell_2)$ is a diagonal projection and $\rho$ is a pure state on
$D(\ell_2)$ then $\rho(P)=0$ or $\rho(P)=1$.\end{lemma}
\begin{proof}
	Suppose $\rho(P)=\lambda\in (0,1)$. Observe by linearity that
	$\rho(I-P)=1-\lambda$. Consider the linear functionals
	$\rho_1,\rho_2:D(\ell_2)\rightarrow \C$ defined by
	$$\rho_1(M):= \frac{1}{\lambda}\rho(PM)\qquad \rho_2(M) :=
	\frac{1}{1-\lambda}\rho((I-P)M),$$
	and observe that they are both states. But now
	$\rho=\lambda\rho_1+(1-\lambda)\rho_2$, so $\rho$ cannot be a pure
state.\end{proof}

Recall that our goal is to show that for every pure state $\rho:D(\ell_2)\rightarrow
\C$ there is a unique extension $\hat{\rho}:B(\ell_2)\rightarrow \C$. It is
clear that at least one canonical extension exists:
$$\hat{\rho}(M) := \rho(\diag(M)),$$
where $\diag(M)$ refers to the diagonal part of $M$, so to show that it is
unique we must show that any extension $\hat{\rho}$ must satisfy
$$ \hat{\rho}(M) = \rho(\diag(M)) = \hat{\rho}(\diag(M)),$$
or in other words
$$ \hat{\rho}(M-\diag(M))=0$$
for every $M\in B(\ell_2)$. This is where diagonal projections and the key
notion of a {\em paving} come in.
\begin{definition} An $\epsilon-$paving of an operator $M\in B(\ell_2)$ is a
	finite collection of diagonal projections $P_1,\ldots, P_k$ satisfying
	$P_1+\ldots+P_k=I$ and 
	$$\|P_iMP_i\|\le \epsilon \|M\|,$$
	for every $i=1,\ldots,k$.
\end{definition}
\begin{conjecture}[Paving Conjecture] For every $\epsilon>0$, every zero
	diagonal $M\in B(\ell_2)$ has an $\epsilon-$paving.\end{conjecture}
\begin{theorem}[Kadison-Singer] The Paving Conjecture implies a positive
	solution to the Kadison-Singer Problem.  \end{theorem}
\begin{proof}
Suppose $\rho$ is a pure state on $D(\ell_2)$ and $\hat{\rho}$ is an extension
of it to $B(\ell_2)$. Let $M\in B(\ell_2)$ and $N=M-\diag(M)$, and fix
$\epsilon>0$. Let $P_1,\ldots, P_k$ be an $\epsilon-$paving of $N$. Observe by
linearity that
\begin{equation}\label{eqn:pave} \hat{\rho}(N) = \sum_{i,j\le k}
\hat{\rho}(P_iNP_j).\end{equation}
By Lemma \ref{lem:diagproj} and linearity, we know that exactly one of the
projections, say $P_1$, satisfies $\hat{\rho}(P_1)=1$ and for the rest of them we have
$\hat{\rho}(P_j)=0$. 
By the Cauchy-Schwarz inequality, each term satisfies
$$ |\hat{\rho}(P_iNP_j)|\le \hat{\rho}(P_i^*P_i)\hat{\rho}(P_j^*N^*NP_j) \land
\hat{\rho}(P_i^*N^*NP_i) \hat{\rho}(P_j^*P_j).$$
Since $P^*P=P$ for any projection $P$, this implies that only the first term in
\eqref{eqn:pave} is nonzero, and we have
$$ \hat{\rho}(N)=\hat{\rho}(P_1NP_1)\le \|P_1NP_1\|\le \epsilon\|N\|,$$
by the paving property. Since $\epsilon>0$ was arbitrary we conclude that
$\hat{\rho}(N)=0$, as desired.
\end{proof}

The pleasing feature of Conjecture \ref{conj:fpc} is that it makes no mention
of pure states. The next major simplification was achieved by Anderson in 1979,
who showed that this conjecture is readily implied by a simple to state conjecture about {\em
finite} matrices.
\begin{conjecture}[Finite Paving Conjecture]\label{conj:fpc} For every $\epsilon>0$ there is a
	$k=k(\epsilon)$ such that for every $n$, every zero diagonal complex
	$n\times n$ matrix $M$ can be $\epsilon-$paved with $k$
projections.\end{conjecture}
\begin{theorem}[Anderson] The Finite Paving Conjecture implies the Infinite
Paving Conjecture.\end{theorem}
We omit the short proof, which shows that a limit of finite pavings can be used to
construct an infinite paving via an Arzela-Ascoli argument. The most important feature of this conjecture is that the
number of projections $k$ is allowed to depend only on $\epsilon$ and {\em not
on the dimension $n$}, and this is because any dependence on $n$ precludes
a limit. It is also easy to see that it is sufficient to prove the conjecture
for a single $\epsilon<1$ and constant $k$, since then any smaller $\epsilon$
can be achieved by iteration. That said, it is substantially more accessible than
the original KSP, and its statement is entirely elementary.

In the decades since Anderson's result, the paving conjecture was shown (using
various finite dimensional linear algebra arguments) to be equivalent to
several other statements about partitioning matrices or sets of vectors into
submatrices or subsets which are ``smaller'' in some appropriate sense. In
particular, the work of Casazza \cite{CEKP07} et al. shows that it is equivalent to a number
of other conjectures in various fields. A very tangible, combinatorial such
statement is the following conjecture of Weaver, which is actually a family of statements indexed by $r\in \mathbb{N}$.
\begin{conjecture}[Weaver $KS_r$]\label{conj:weaver} There are universal constants $\epsilon>0,
	\delta>0$
	such that the following holds. Suppose $v_1,\ldots, v_m\in\C^n$ are vectors
	satisfying $\sum_{i=1}^m v_iv_i^* = I$ and $\|v_i\|\le \delta$. Then
	there is a partition $[m]=T_1\cup T_2\ldots\cup T_r$ such that for every
	$j=1,\ldots,r$:
	\begin{equation}\label{eqn:xxx} \left\|\sum_{i\in T_j}
	v_iv_i^*\right\|\le 1-\epsilon.\end{equation}
\end{conjecture}
The equivalence between this conjecture and paving is obtained by passing from
paving zero diagonal matrices to paving Hermitian matrices to paving positive semidefinite matrices (by
adding a multiple of the identity) and then to paving projection matrices (via a
dilation argument)\footnote{For an exposition
of the (elementary) details of these reductions, the reader is encouraged to
consult Tao's blog post
\texttt{https://terrytao.wordpress.com/tag/kadison-singer-problem/} .}.
Dualizing the statement for projection matrices yields the family of statements
$KS_r$. It was shown in \cite{weaver} that the validity of $KS_r$ for any finite $r$
is equivalent to Kadison-Singer. 

The main result of \cite{IF2} is a strong version of $KS_r$ for every $r$:
\begin{theorem}\label{thm:partition}
Let $r > 1$ be an integer, and let $u_1, \dots, u_m \in \C^d$ be vectors such that 
\[
  \sum_{i=1}^m \expec{}{u_{i} u_{i}^{*}} = I_d
  \AND
\norm{u_i}^2 \leq \delta \text{ for all $i$.}
\]
Then there exists a partition $\{ A_1, \dots A_r \}$ of $[m]$ such that 
\[
\norm{ \sum_{i \in A_j} {u_i} {u_i}^{*}} \leq \frac{1}{r}\left(1+\sqrt{r \delta}\right)^{2}
\]
\end{theorem}
Since the outer products of the vectors sum to the identity, the best one could hope for is to be able to split the vectors into $r$ groups such that each was {\em exactly} $(1/r)I$.
Hence Theorem~\ref{thm:partition} guarantees that for any vectors $v_i$, one can
get within a factor of $\left(1+\sqrt{r \delta}\right)^{2}$ of the best one
could get with the best possible $u_i$.

\section{Spectral Graph Theory}
In this section we describe a different, more recent story which leads to the same core
problem $KS_2$, and which is in fact how the present authors were introduced to
this problem. The question we consider is: 

\begin{quote}
Given a finite undirected graph, can it be approximated by a graph with very few
edges?
\end{quote}

The answer to this question of course depends on what we mean by approximate,
and this is where the Laplacian operator comes in.  Recall that the discrete Laplacian of a weighted graph $G=(V,E,w)$ may be defined as the following sum of
  rank one matrices over the edges:
\[
  L_{G} = \sum_{(a,b) \in E} w_{(a,b)} (e_{a} - e_{b}) (e_{a} - e_{b})^{T}.
\]
In the unweighted $d-$regular case, it is easy to see that $L=dI-A$, so the
eigenvalues of the Laplacian are just $d$ minus the eigenvalues of the adjacency
  matrix.
The Laplacian matrix of a graph always has an eigenvalue of $0$; this is a 
  trivial eigenvalue, and the corresponding eigenvectors are the constant vectors.

Following Spielman and Teng, we say that two graphs $G$ and $H$ on the same
  vertex set $V$ are {\em spectral approximations} of each other if their
  Laplacian quadratic forms multiplicatively approximate each other:
  $$ \kappa_1\cdot x^TL_Hx\leq x^TL_Gx\leq \kappa_2 \cdot x^TL_Hx\qquad\forall x\in\R^V,$$
for some approximation factors $\kappa_1,\kappa_2>0$. We will write this as
	$$ \kappa_1\cdot L_H \pleq L_G\pleq \kappa_2\cdot L_H,$$
where $A\pleq B$ means that $B-A$ is positive semidefinite, i.e., $x^T(B-A)x\ge
  0$ for every $x$.

The complete graph on $n$ vertices, $K_{n}$, is the graph with an edge of weight $1$ between every pair of vertices.
All of the eigenvalues of $L_{K_{n}}$ other than $0$ are equal to $n$.
If $G$ is a $d$-regular Ramanujan graph \cite{LPS}, then $0$ is the trivial 
  eigenvalue of its Laplacian matrix, $L_{G}$,
  and all of the other eigenvalues of $L_{G}$ are between $d - 2 \sqrt{d-1}$ and $d + 2 \sqrt{d-1}$.
After a simple rescaling, this allows us to conclude that
\[
  (1 - 2 \sqrt{d-1}/d) L_{K_{n}} 
  \pleq (n/d) L_{G} 
  \pleq  (1 + 2 \sqrt{d-1}/d)  L_{K_{n}} .
\]
So, $(n/d) L_{G}$ is a good approximation of $L_{K_{n}}$.

Batson, Spielman and Srivastava proved that every weighted graph has an approximation that is almost this good.
\begin{theorem}[\cite{BSS}]\label{thm:BSSgraphs}
For every $d>1$
  and every weighted graph $G=(V,E,w)$ on $n$ vertices,
  there exists a weighted graph $H=(V,F,\tilde{w})$ with $\lceil d(n-1)\rceil$
  edges that satisfies:
\begin{equation}\label{eqn:BSSgraph}
\left(1-\frac{1}{\sqrt{d}}\right)^2 L_G  \pleq L_{H} \pleq
\left(1+\frac{1}{\sqrt{d}}\right)^2 L_{G}.
\end{equation}
\end{theorem}

However, their proof had very little to do with graphs.
In fact, they derived their result from the following theorem about sparse weighted
  approximations of sums of rank one matrices.
\begin{theorem}[\cite{BSS}]\label{thm:BSSmatrices}
Let $v_{1}, v_{2}, \ldots , v_{m}$ be vectors in $\R^{n}$ with
\[
  \sum_{i} v_{i} v_{i}^{T} = V.
\]
For every $\epsilon \in (0,1)$,
  there exist non-negative real numbers
  $s_{i}$
  with
\[
|\{i : s_{i} \not = 0\}|
\leq 
\lceil n / \epsilon^{2}\rceil
\]
so that
\begin{equation}\label{eqn:BSSvecs}
(1-\epsilon)^{2} V 
  \pleq \sum_{i} s_{i} v_{i} v_{i}^{T}
  \pleq (1+\epsilon)^{2} V.
\end{equation}
\end{theorem}
Taking $V$ to be a Laplacian matrix written as a sum of outer products and
  setting $\epsilon=1/\sqrt{d}$ immediately yields Theorem \ref{thm:BSSgraphs}.

Theorem \ref{thm:BSSmatrices} is very general and turned out to be useful in a
  variety of areas including graph theory, numerical linear algebra, and metric
  geometry (see, for instance, the survey of Naor \cite{naor2011sparse}).
One of its limitations is that it provides no guarantees on the weights $s_i$
  that it produces, which can vary wildly.  
So it is natural to ask: is there a version of Theorem \ref{thm:BSSmatrices} in
  which all the weights are the same?

This may seem like a minor technical point, but it is actually a fundamental
  difference.
In particular, Gil Kalai observed that the statement of Theorem~\ref{thm:BSSmatrices} with
  $V=I$ is similar to Weaver's Conjecture.
It turns out that the natural unweighted variant of it is essentially {\em the
  same} as Weaver's conjecture.

To make the connection, let us go back to the setting of $KS_2$ and observe that for any
partition of a given set of vectors $v_1,\ldots,v_m$ we have:
\[
   \sum_{i \in S_{1}} v_{i} v_{i}^{*} + \sum_{i \in S_{2}} v_{i} v_{i}^{*}  = I,
\]
so that condition \eqref{eqn:xxx}
  is equivalent to
\[
  \epsilon I \pleq  {\sum_{i \in S_{1}} v_{i} v_{i}^{*}} \pleq (1-\epsilon) I.
\]
Thus, choosing a subset of the weights $s_i$ to be non-zero in Theorem
  \ref{thm:BSSmatrices} is similar to choosing the set $S_{1}$.
The essential difference is that Conjecture~\ref{conj:weaver} assumes a bound on
the lengths of the vectors $v_i$ and in return requires the stronger conclusion
  that all of the $s_i$ are either $0$ or $1$.
It is easy to see that long vectors are an obstacle to the existence of a good
  partition; an extreme example is provided by considering an orthonormal basis
  $e_1,\ldots,e_n$.
Weaver's conjecture asserts that this is the only obstacle.

\section{Two Examples and their Expected Characteristic Polynomials}
In this section we discuss two key examples which highlight the difficulties in
solving Weaver's problem using familiar combinatorial and random matrix
techniques.

\begin{example}[Diagonal Case]\label{ex:diag} Let $\delta>0$ and $m=n/\delta$, and let $v_1,\ldots,v_m$
consist of $1/\delta$ copies each of $\sqrt{\delta}e_1, \sqrt{\delta}e_2,\ldots,
\sqrt{\delta}e_n$, where $e_i$ are the standard basis vectors in $\C^n$. Then it is clear
that $\|v_i\|^2 = \delta$ for every $i$ and 
$$\sum_{i\le m} v_iv_i^*= \frac{1}{\delta}\sum_{i\le n} \delta e_ie_i^*=I.$$
\end{example}
It is not hard to find a balanced partition in this example: for each standard
basis vector $e_i$, simply divide the copies of that vector into subsets of
almost equal size. Note that this simple {\em deterministic} strategy crucially requires
knowing that the given vectors can be split up into $n$ groups, each of which is
a a one-dimensional instance of $KS_2$. Also note that it would not be as clear
how to proceed if one were to (say) add a small amount of noise to each vector
--- a clustering approach might still work, but would be somewhat nontrivial.

On the other hand, balanced partitions of these vectors are exponentially rare.
To see this, consider a uniformly random partition of $v_1,\ldots,v_m$ into
$T_1\cup T_2$. Then $T_1$ is a random subset of $[m]$, containing each $v_i$
with probability $1/2$. Thus, for any $i$ the probability that all copies of
$e_i$ appear in $T_1$ is $(1/2)^{1/\delta}$, and the probability that this does
{\em not} happen for all $i=1,\ldots,n$ is:
$$ \left(1-2^{-1/\delta}\right)^n \approx \exp(-n2^{-1/\delta}),$$
which is exponentially small unless $\delta\le 1/\log(n)$. A similar probability is
obtained even if we consider random balanced partitions with $|T_1|=|T_2|$, but
we omit the details.

The second example exhibits exactly the opposite kind of behavior and is given
by random vectors.
\begin{example}[Random Case]\label{ex:random} Let $\delta>0$ and $m=n/\delta$ and let $v_1,\ldots,v_m\in\R^n$ be
	i.i.d. random Gaussian vectors scaled so that
	$$ \E \|v_i\|^2 =\delta.$$
	By standard concentration inequalities (see e.g. \cite{barvinok2005math}) we have:
	$$ \P [ \|v_i\|^2  > (1+t)\delta] \le \exp(-t^2n/4),$$
	which implies by a union bound that
	$$ \max_i \|v_i\|^2 \le (1+o(1))\delta\qquad \textrm{with probability at
	least}\quad 1-\exp(-cn),$$
	as long as $m=\exp(o(n))$. Moreover, by well-known properties of
	rectangular Gaussian random matrices (e.g., Sections 5.3 and 5.4 of
\cite{vershynin2010introduction}) the eigenvalues of
	$$ V=\sum_{i\le m} v_iv_i^T $$
	are contained in the interval $[(1-\sqrt{\delta}-o(1))^2,
	(1+\sqrt{\delta}+o(1))^2]$ with exponentially good probability.
	Thus, the vectors $w_i:=V^{-1/2}v_i$ satisfy the conditions of $KS_2$
	with constant at most $5\delta$ whenever (say) $\delta<1/4$.
\end{example}
It is not clear how to deterministically partition a typical instance of Example
2 --- in particular, pairs of vectors are not orthogonal and they do not naturally decompose into
lower-dimensional instances of $KS_2$. 

However, in contrast to the previous example, a random partition works very well
here. If we take $T_1$ to be a random subset of $[m]$ of size $m/2$, then 
$$V_1=\sum_{i\in T_1} v_iv_i^T$$
is itself a Wishart matrix whose expectation is $I/2$. Again by the Bai-Yin
theorem, we conclude that the eigenvalues of $V_1$ are contained in $[\frac12
(1-\sqrt{2\delta}+o(1))^2, \frac12 (1+\sqrt{2\delta}-o(1))^2]$ with exponentially high
probability. The same is true for $T_2=[m]\setminus T_1$, so we conclude that a
random partition is balanced with high probability.

The difficulty of $KS_2$ arises from the fact that there are no tools which
readily handle both examples and the various possible combinations of them. 

The following well-known result in random matrix theory can be used to analyze a
random partition, by taking $A_i=v_iv_i^T$ and taking $T_1$ to be all $i$ such
that $\epsilon_i=+1$. 
\begin{theorem}[Matrix Chernoff \cite{tropp2012user}]
Given random Hermitian matrices $A_1\ldots A_m\in\C^{n\times n}$ and independent Bernoulli
signs $\epsilon_1,\ldots,\epsilon_m$, we have
$$\mathbb{P}\left[ \|\sum_i \epsilon_i A_i\|\ge t\right]\leq n\cdot
\exp(-\frac{t^2}{2 \|\sum_i A_i^2\|}).$$
\end{theorem}
Note that for an instance of $KS_2$ we have $\sum_i A_i^2 = \sum_i
\|v_i\|^2v_iv_i^T\le \delta I$, so the above probability is less than one when
$t$ is a small constant and $\delta\le c/\log(n)$, yielding a balanced
partition. The tightness of this result is witnessed by
Example 1, which shows that for larger $\delta$ the probability of being
unbalanced quickly approaches one.  Unfortunately, a result in which $\delta$
depends on $n$ is insufficient for Kadison-Singer (since we will take a limit of
pavings as $n\rightarrow \infty$) and also for various graph theory
applications. 

There are of course many other results about random matrices which can be used
to analyze specific families of instances. However, results which establish
dimension-free bounds (without the fatal $\log(n)$ factor introduced by Matrix
Chernoff) typically rely on high symmetry assumptions about the vectors (such as
i.i.d. entries) or on strong geometric regularity properties (such as
log-concavity), which are far too restrictive for the general case.
Thus, new ideas are required.
\subsection*{Expected Characteristic Polynomials}
The Method of Interlacing Families is a way of analyzing certain random matrices
which is oblivious to the diagonal/random dichotomy above, i.e., it is able to
provide a uniform dimension-free bound on both cases and on everything in
between. Unlike most results about random matrices, it provides estimates on the
eigenvalues which hold with {\em exponentially small} but nonetheless nonzero
probability. As such the method is not really probabilistic in nature, but the
language of probability theory provides a convenient notation.

The central idea is to access the distribution of the eigenvalues of a random
matrix via its {\em expected characteristic polynomial}:
$$\E \chi(A) = \E \det(xI-A).$$

Before describing the general approach, let's examine the expected
characteristic polynomials corresponding to Examples 1 and 2 above. 

\begin{example}[$\E\chi$ for the Diagonal Case] Let $v_1,\ldots,v_m$ be as in
Example \ref{ex:diag} with $m=kn$ where $k=1/\delta$ is an integer. Let 
$$A=\sum_{i\le m} b_iv_iv_i^T,$$
where $b_1,\ldots,b_m$ are i.i.d. random variables each $0$ with probability $1/2$ and $1$ otherwise, corresponding
to a random subset of the vectors. Certainly $\E A=I/2$, and the $A$ is diagonal
with independent entries $A_1,\ldots,A_n$ indicating the number of times the
vector $\sqrt{\delta}e_i$ is chosen. We now have
\begin{align*}
\E\det(xI-A) &= \E (x-A_1)(x-A_2)\ldots (x-A_n)\\
&=\prod_{i\le n} \E (x-A_i)\quad\textrm{since the $A_i$ are independent}
\\&=(x-1/2)^n.\end{align*}
There are two interesting things about this calculation. The first is that the expected
characteristic polynomial is real-rooted, which is in general not the case since
real-rootedness is not necessarily preserved under taking sums. The second is
that the roots reflect the behavior of (one half of) the ideal balanced
partition: the bad allocations with $A_i$ that are
too large or small seem to have have ``cancelled out''.
\end{example}

\begin{example}[$\E\chi$ for the Random Case] Let $v_1,\ldots,v_m$ be Gaussian random vectors
with norm $\E \|v_i\|^2 = \delta$ and let $A=\sum_{i\le m/2}v_iv_i^T$ be the
empirical covariance matrix of half of them. Observe that for any matrix $M$ and
a single Gaussian random vector $v$ with $\E \|v\|^2=\delta$ we have:
\begin{align*}
\E \det(xI-M-vv^T) &= \det(xI-M)\cdot \E (1-v^T(xI-M)^{-1}v)
\\&= \det(xI-M) (1-\mathrm{Tr}((xI-M)^{-1}\bullet \E vv^T))
\\&= \det(xI-M) -\delta \det(xI-M)\cdot \mathrm{Tr}(xI-M)^{-1}
\\&= \left(1-\delta \frac{d}{dx}\right)\det(xI-M)
\end{align*}
Thus, adding a random rank one Gaussian outer product corresponds to subtracting
off a multiple of the derivative of the characteristic polynomial. Since the
$v_i$ in our example are independent, we may apply this fact inductively to
conclude that
\begin{equation}\label{eqn:laguerre} \E\det(xI-A)
=\left(1-\delta\frac{d}{dx}\right)^{m/2} x^n.\end{equation}

We now observe that if a polynomial $f(x)=\prod_{i\le n} (x-\lambda_i)$ has real roots, then $f(x)-cf'(x)$
also has real roots for every real $c$; the reason is that $f(x)-cf'(x)=0$
precisely when 
\begin{equation}\label{eqn:logderiv}
\sum_{i=1}^n\frac{1}{x-\lambda_i}=f'(x)/f(x)=1/c.\end{equation}
By examining the behavior of this rational function between its poles and
noting that every multiple root of $f$ is also a root of $f'$ with one less
multiplicity,  we see that the number
of solutions to \eqref{eqn:logderiv} is equal to the degree of $f$.

Thus, we conclude that \eqref{eqn:laguerre} has real roots. But more is true: it
turns out that these polynomials are exactly equal to certain orthogonal
polynomials known as the {\em associated Laguerre polynomials}, whose roots have
been studied in great detail (see \cite{MSSICM} Section 3.2 for more details). This
connection implies that the roots of $\E \det(xI-A)$ are contained in the inverval $[\frac12
(1-\sqrt{2\delta})^2, \frac12 (1+\sqrt{2\delta})^2]$, which is precisely what is
expected for a random partition in \ref{ex:random}.
\end{example}

Thus, in both of the extreme cases, the expected characteristic polynomial is
real-rooted and captures the behavior of the kind of partition that we want ---
greedy in the diagonal case, and random in the random case. 
It turns out that this is not an accident and for a large class of random
matrices the expected characteristic polynomial {\em always}
has real roots, a property which can be used to relate the roots to the
distribution of the eigenvalues of the matrix itself. Then, tools from the
analytic theory of polynomials can be used to bound the roots, and thereby
obtain information about the eigenvalues.

The main theorem produced by this approach is the following:
\begin{theorem}\label{thm:general}
Let $\epsilon > 0$ and let
  $v_1, \dots, v_m$ be independent random vectors in $\C^d$ with finite support such that 
\begin{equation}\label{eqn:mainSum}
\sum_{i=1}^m \expec{}{v_{i} v_{i}^{*}} = I_d,
\end{equation}
and
\[
\expec{}{ \norm{v_{i}}^{2}} \leq \epsilon, \text{for all $i$.}
\]
Then 
\[
\prob{}{ \norm{\sum_{i=1}^m v_i v_i^{*}} \leq (1 + \sqrt{\epsilon})^2 } > 0
\]
\end{theorem}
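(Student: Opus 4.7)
The plan is to turn the positive-probability statement into an existence statement via the method of interlacing families, and then to bound the resulting ``root'' polynomial by identifying it with a mixed characteristic polynomial of the covariance matrices $A_{i} = \expec{}{v_{i} v_{i}^{*}}$. Concretely, I would let $S_{i}$ denote the finite support of $v_{i}$ with $p_{i,w} = \prob{}{v_{i}=w}$, and to each tuple $s=(w_{1},\dots,w_{m}) \in S_{1}\times\cdots\times S_{m}$ attach
\[
q_{s}(x) = \chi\!\left[\textstyle\sum_{i=1}^{m} w_{i} w_{i}^{*}\right]\!(x).
\]
Placing the $q_{s}$ at the leaves of the product tree with weights $\prod_{i} p_{i,w_{i}}$, the root polynomial is $\expec{}{\chi[\sum_{i} v_{i} v_{i}^{*}]}$ and each internal-node polynomial is a conditional expectation of the leaves beneath it. The real-stability tools of the previous sections make every such conditional expected characteristic polynomial real-rooted, so $\{q_{s}\}$ forms an interlacing family in the sense established earlier, and the corresponding existence theorem supplies some leaf $s^{*}$ with
\[
\lambda_{\max}(q_{s^{*}}) \;\le\; \lambda_{\max}\bigl(\expec{}{\chi[\textstyle\sum_{i} v_{i} v_{i}^{*}]}\bigr).
\]

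Next I would identify the right-hand side with a mixed characteristic polynomial. Using the formula from the previous section that for independent random rank-one vectors the expected characteristic polynomial of their sum equals $\mu[A_{1},\dots,A_{m}]$ with $A_{i} = \expec{}{v_{i} v_{i}^{*}}$, and noting that by hypothesis $\sum_{i} A_{i} = I_{d}$ and $\operatorname{tr}(A_{i}) = \expec{}{\norm{v_{i}}^{2}} \le \epsilon$, the largest-root bound for mixed characteristic polynomials from the previous section gives $\lambda_{\max}(\mu[A_{1},\dots,A_{m}]) \le (1+\sqrt{\epsilon})^{2}$. Chaining the two inequalities yields $\lambda_{\max}(q_{s^{*}}) \le (1+\sqrt{\epsilon})^{2}$, and since the joint event $\{v_{i}=w^{*}_{i}\text{ for every }i\}$ occurs with probability $\prod_{i} p_{i,w_{i}^{*}} > 0$, the theorem follows.

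The only step internal to this proof that requires genuine care is the verification that every intermediate conditional expectation along the product tree is real-rooted, which is what promotes the tree of $q_{s}$'s into a bona fide interlacing family; this reduces cleanly to the real-stability facts of $\det(xI+\sum_{i} z_{i} v_{i} v_{i}^{*})$ already in hand. The two truly deep ingredients---the identity $\expec{}{\chi[\sum_{i} v_{i} v_{i}^{*}]} = \mu[A_{1},\dots,A_{m}]$ and the $(1+\sqrt{\epsilon})^{2}$ barrier-function bound on the largest root of $\mu[A_{1},\dots,A_{m}]$---are imported wholesale from the previous sections, so within this theorem the work is a short assembly rather than a new technical obstacle.
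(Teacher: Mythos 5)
Your proposal follows the paper's proof essentially verbatim: both set $A_i = \expec{}{v_i v_i^*}$, observe $\trace{A_i} = \expec{}{\norm{v_i}^2} \le \epsilon$ and $\sum_i A_i = I_d$, identify the expected characteristic polynomial with the mixed characteristic polynomial $\mu[A_1,\dots,A_m]$, bound its largest root by $(1+\sqrt{\epsilon})^2$ via the barrier argument, and then invoke the interlacing-family machinery to extract a leaf (an assignment of values to the $v_i$) whose characteristic polynomial has its largest root below that of the root polynomial. The proof is correct and is the same short assembly of the three imported results that the paper gives.
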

The above theorem compares favorable to what is yielded by the Matrix Chernoff
bound, which is that  $\|\sum_{i=1}^mv_iv_i^*\|\le C(\epsilon)\cdot\log n$ with high
  probability.
Here we are able to control the deviation at the much smaller
  scale $(1+\sqrt{\epsilon})^2$, but only with nonzero probability.

In the remainder of this document, we show how to prove this theorem.

\section{Interlacing Families}\label{sec:interlacing}
Our proofs inherently rely on bounding the largest eigenvalue of various matrices.
One key idea in our methods for obtaining such bounds is the use of characteristic polynomials in our analysis.
Since the eigenvalues of a matrix $A$ are exactly the roots of its characteristic polynomial $\mydet{xI - A}$, this does not seem to gain us any leverage.
The leverage comes, however, when we replace {\em random} matrices with {\em random} characteristic polynomials.

On the surface this may seem like an odd idea. 
In general, the roots of an average of real rooted polynomials are not necessarily related in any way to the roots of the original collection.
However, there are situations where this works quite well.
\begin{lemma}\label{lem:separate}
Let $p_1, \dots, p_k$ be polynomials and $[s, t]$ an interval such that
\begin{itemize}
\item each $p_i(s)$ has the same sign (or is $0$)
\item each $p_i(t)$ has the same sign (or is $0$)
\item each $p_i$ has exactly one real root in $[s, t]$.
\end{itemize}
Then $\sum_i p_i$ has exactly one real root in $[s, t]$ and it lies between the roots of some $p_a$ and $p_b$.
\end{lemma}
\begin{proof}
This is illustrated in Figure~\ref{fig:sep} (the blue line is the average of the red ones).
Let $p(x) = \sum_i p_i(x)$ and without loss of generality, assume $p_i(s) \geq 0$ for all $i$.
Then $p(x) \geq 0$ and, since each polynomial switches signs somewhere in the interval $[s, t]$ we have $p(t) \leq 0$.
By continuity, there must be a point $r \in [s, t]$ at which $p(r) = 0$.

Now if we look at the value of each $p_i$ at the point $r$, we know the values must add up to $0$.
Hence there exist polynomials $p_a$ and $p_b$ such that  $p_a(r) \leq 0 \leq p_b(r)$ and these will have roots which are smaller (larger) than $r$ (respectively).
\end{proof}

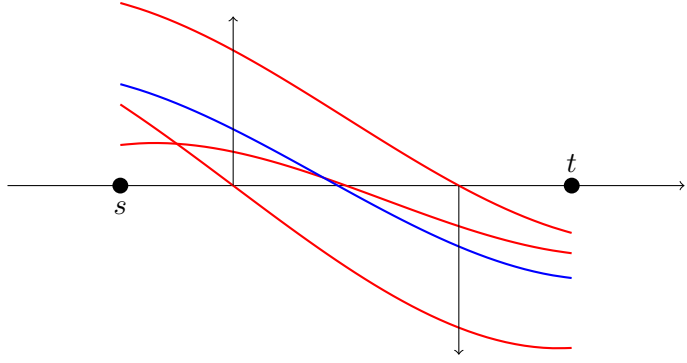
\begin{figure}[h]
 \begin{center}
\begin{tikzpicture}[scale=1.5]

\draw[->] (0,0) -- (6,0);
\coordinate (s) at (1,0);
\coordinate (t) at (5,0);

\draw[domain=1:5,smooth,variable=\x,red,thick]  plot ({\x},{0.03*(\x )*(\x - 3)*(\x - 7)});
\draw[domain=1:5,smooth,variable=\x,red,thick]  plot ({\x},{0.03*(\x + 3)*(\x - 2)*(\x - 7)});
\draw[domain=1:5,smooth,variable=\x,red,thick]  plot ({\x},{0.03*(\x + 2)*(\x - 4)*(\x - 7)});
\draw[domain=1:5,smooth,variable=\x,blue,thick]  plot ({\x},{0.03*(\x * \x - (4/3) *\x - (14/3))*(\x - 7)});

\fill (s) circle (2pt);
\fill (t) circle (2pt);

\node (ls) at (1, -0.2) {$s$};
\node (lt) at (5, 0.2) {$t$};

\draw[->] (2, 0) -- (2, 1.5);
\draw[->] (4, 0) -- (4, -1.5);

\end{tikzpicture}
\end{center}
\caption{Picture of Lemma~\ref{lem:separate}.}
\label{fig:sep}
\end{figure}

Lemma~\ref{lem:separate} asserts that, as long as our collection of polynomials
has its roots bunched together inside disjoint intervals, then the sum of
the polynomials is real rooted and one can compare the roots of the sum to the
individual polynomials.  To characterize the collections of polynomials for
which this holds, we recall the definition of interlacing polynomials.

\begin{definition}\label{def:interlacing}
We say that a real rooted
  polynomial $g(x) = \alpha_{0} \prod_{i=1}^{n-1} (x - \alpha_{i})$ \emph{interlaces} a
  real rooted polynomial 
  $f(x) = \beta_{0} \prod_{i=1}^{n} (x - \beta_{i})$ if
\[
  \beta_{1} \leq \alpha_{1} \leq \beta_{2} \leq \alpha_{2} \leq \dotsb \leq
  \alpha_{n-1}\leq \beta_{n} 
\]
We say that $g (x)$ \emph{strictly interlaces} $f (x)$ if all of these inequalities are strict.
We say that polynomials $f_{1} (x), \dotsc , f_{k} (x)$ have a \emph{common interlacing}
  if there is a polynomial $g (x)$ so that $g (x)$ interlaces $f_i (x)$ for each $i$.
\end{definition}

In the event that a collection of polynomials $f_1, \dots f_k$ has a common
interlacer $g$, the roots of $g$ separate the roots of the $f_i$ in exactly the
way necessary for Lemma~\ref{lem:separate} to hold.  This leads to the following
corollary:

\begin{corollary}\label{lem:interlacing}
Let $f_{1}, \dotsc , f_{k}$ be polynomials of the same degree that are real-rooted and have positive leading coefficients.
Define
\[
  f_{\emptyset} = \sum_{i=1}^{k} f_{i}.
\]
If $f_{1}, \dotsc , f_{k}$ have a common interlacing,
  then
  there exists an $i$ so that the largest root of $f_{i}$ is at most the largest root of
  $f_{\emptyset}$.
\end{corollary}

The hope would be to apply Lemma~\ref{lem:interlacing} to the collection of polynomials defined in Section~\ref{sec:mixed}.
These polynomials, however, do not have a common interlacing.
Instead, we will need to use Lemma~\ref{lem:interlacing} inductively on subcollections of these polynomials that do have a common interlacing.
This inspires the following definition from \cite{IF1}:

\begin{definition}\label{def:family}
Let $S_{1}, \dots , S_{m}$ be finite sets and for every assignment $s_{1}, \dots, s_{m} \in S_{1} \times \dots \times S_{m}$
  let $f_{s_{1}, \dots , s_{m}} (x)$ be a real-rooted degree $n$ polynomial with positive leading coefficient.
For a partial assignment $s_1, \dots, s_k \in S_1 \times \ldots \times S_k$ with $k < m$, define
\[
  f_{s_{1},\dots , s_{k}} \defeq
\sum_{s_{k+1} \in S_{k+1}, \dots , s_{m} \in S_{m}}
  f_{s_{1}, \dots ,s_{k}, s_{k+1}, \dots , s_{m}},
\]
as well as
\[
  f_{\emptyset} \defeq \sum_{s_{1} \in S_{1}, \dots , s_{m} \in S_{m}}
  f_{s_{1}, \dots , s_{m}}.
\]

We say that the polynomials $\{f_{s_{1}, \dots , s_{m}} \}$
  form an \textit{interlacing family} if for all $k=0,\ldots, m-1$, and all
  $s_{1}, \dots, s_{k} \in S_{1} \times \dots \times S_{k}$,
  the polynomials
\[
  \{f_{s_{1}, \dots , s_{k},t}\}_{t\in S_{k+1}}
\]
have a common interlacing.
\end{definition}

Given an interlacing family, one can form a tree of partial assignment
polynomials with $f_{\emptyset}$ at the top (we avoid saying ``root'' since we
already use it as a synonym for ``zeroes'') and where each polynomial $f_{s_1,
\dots, s_k}$ will have the collection of polynomials $\{ f_{s_1, \dots, s_k, t}
\}_{t \in S_{k+1}}$ as its children.  The idea, then, will be to apply
Lemma~\ref{lem:interlacing} iteratively as one walks down the tree (see
Figure~\ref{fig:tree}).

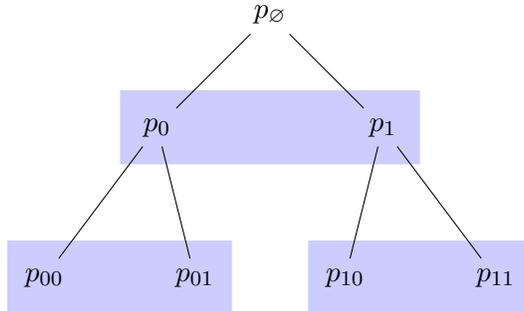
\begin{figure}[h]
\begin{center}
\begin{tikzpicture}

\draw[fill=blue!20,draw=white] (0.5,0) rectangle  (3.5,1);
\draw[fill=blue!20,draw=white] (4.5,0) rectangle  (7.5,1);

\draw[fill=blue!20,draw=white] (2,2) rectangle  (6,3);

\draw(1,0.5) node (p1) {$p_{00}$};
\draw(3,0.5) node (p2) {$p_{01}$};
\draw(5,0.5) node (p3) {$p_{10}$};
\draw(7,0.5) node (p4) {$p_{11}$};

\draw (2.5, 2.5) node (p12) {$p_{0}$};
\draw (5.5, 2.5) node (p34) {$p_{1}$};

\draw[-] (p12) -- (p1);
\draw[-] (p12) -- (p2);
\draw[-] (p34) -- (p3);
\draw[-] (p34) -- (p4);

\draw (4, 4) node (p1234) {$p_{\varnothing}$};
\draw[-] (p1234) -- (p12);
\draw[-] (p1234) -- (p34);

\end{tikzpicture}
\end{center}
\caption{A tree of partial assignment polynomials. The purple blocks denote subsets of polynomials that have a common interlacer.}
\label{fig:tree}
\end{figure}

\begin{theorem}\label{thm:interlacing}
Let $S_{1}, \dots , S_{m}$
  be finite sets and let
 $\setof{f_{s_{1}, \dots , s_{m}} }$ be an interlacing family of polynomials.
Then, there exists some $s_{1},\dots , s_{m} \in S_{1} \times \dots \times S_{m}$
  so that the largest root of
  $f_{s_{1}, \dots , s_{m}}$ is less than the largest root of $f_{\emptyset}$.
\end{theorem}
\begin{proof}
From the definition of an interlacing family, we know that the polynomials $\{ f_t\}$ for ${t \in S_1}$
  have a common interlacing and that their sum is $f_{\emptyset}$.
So, Lemma~\ref{lem:interlacing} tells us that one of the polynomials has largest root at most the largest root of $f_{\emptyset}$.
We now proceed inductively.
For any $s_{1}, \dots , s_{k}$, we know that the polynomials
  $\{f_{s_{1}, \dots , s_k, t}\}$ for $t \in S_{k+1}$ have a common interlacing and
  that their sum is $f_{s_{1}, \dots , s_{k}}$.
So, for some choice of $t$ (say $s_{k+1}$) the largest root of the polynomial
  $f_{s_{1}, \dots , s_{k+1}}$
  is at most the largest root of
  $f_{s_{1}, \dots , s_{k}}$.
\end{proof}

Our first goal will be to prove that the characteristic polynomials of sums of
independent rank one random matrices form an interlacing family.
According to Definition~\ref{def:family}, this requires establishing the existence of certain common interlacings.
We will do this using the fact that common interlacings are equivalent to
real-rootedness statements, a result which seems to have been discovered a number of times.
The following appears as Theorem~$2.1$ of Dedieu~\cite{Dedieu}, (essentially) as
Theorem~$2'$ of Fell~\cite{Fell}, and as (a special case of) Theorem 3.6 of Chudnovsky and
Seymour~\cite{ChudnovskySeymour}.

\begin{lemma}\label{lem:Fisk}
Let $f_1,\ldots, f_k$ be (univariate) polynomials of the same degree with
positive leading coefficients. Then $f_1,\ldots, f_k$ have a common interlacing if
and only if $\sum_{i=1}^k\lambda_i f_i$ is real rooted for all convex
combinations $\lambda_i \geq 0, \sum_{i=1}^k\lambda_i=1$.
\end{lemma}
We establish the necessary real-rootedness statements in Section 5.

Our second goal will be to bound the largest root of the corresponding
$f_{\emptyset}$ and then use Theorem~\ref{thm:interlacing} to assert the
existance of some polynomial in the original collection that has largest root
smaller than $f_{\emptyset}$.

The analysis will benefit from a slight generalization of interlacing that allows for polynomials to have the same degree.
Let $f$ and $g$ be real rooted polynomials and let $r_f$ and $r_g$ be their smallest roots (respectively).
\begin{definition}
We say that $f$ {\em subinterlaces} $g$ (written $f \ll g$) if either 
\begin{enumerate}
\item $f$ interlaces $g$, or
\item $r_f \leq r_g$ and $f(x) / (x - r_f)$ interlaces $g$.
\end{enumerate}
\end{definition}

In particular, we will use the following theorem of Hermite, Kakeya, and Obreschkoff:

\begin{theorem}{Hermite-Kakeya-Obreschkoff}\label{thm:HKO}
Let $f$ and $g$ be real rooted polynomials.
Then $af + bg$ is real rooted for all $a,b \in \R$ if and only if $f \ll g$ or $g \ll f$.
\end{theorem}

\section{The Mixed Characteristic Polynomial}\label{sec:mixed}

In this section, we obtain a useful formula for the expected characteristic
polynomials which are relevant to the proof of Theorem \ref{thm:general}, and
show that these polynomials are always real-rooted, which is crucial to the
interlacing method.

We begin by recording some well-known facts from linear algebra.
For a Hermitian matrix $M \in \Complex{d \times d}$ we write the characteristic polynomial of $M$ 
  in a variable $x$ as
\[
  \charp{M}{x} = \mydet{xI - M}.
\]
The following lemma is sometimes known as the {\em matrix determinant lemma} or {\em rank-$1$ update formula}.

\begin{lemma}\label{lem:rank1update}
If $A$ is an invertible matrix and $u,v$ are vectors, then
\[
\mydet{A + uv^*} = \mydet{A} (1 + v^* A^{-1} u)
\]
\end{lemma}

We will utilize Jacobi's formula for the derivative of the determinant of a matrix.
\begin{theorem}\label{thm:jacobi}
If $A$ and $B$ are matrices of the same dimensions and $A$ is invertible, then 
\[
\partial_t \mydet{A + tB} \Big|_{t=0} = \mydet{A}\trace{A^{-1} B}.
\]
\end{theorem}

Using the previous two results, we have the following easy corollary.

\begin{corollary}\label{cor:jameslee}
For an invertible matrix $A$ and random vector $v$, we have
\[
\expec{}{\mydet{A - vv^*}} 
= (1 - \partial_t) \mydet{A + t \expec{}{vv^*}}\Big|_{t=0}
\]
\end{corollary}
\begin{proof}
By Lemma~\ref{lem:rank1update}, we have
\begin{align}
\expec{}{\mydet{A - vv^*}}
&= \expec{}{\mydet{A}(1 - v^*A^{-1}v)} \notag
\\&= \expec{}{\mydet{A}(1 - \trace{A^{-1}vv^*})} \notag
\\&= \mydet{A} - \mydet{A}\expec{}{\trace{A^{-1}vv^*}} \label{eq:jameslee}
\end{align}
On the other hand, by Theorem~\ref{thm:jacobi}, we have
\begin{align*}
(1 - \partial_t) \mydet{A + t \expec{}{vv^*}}\Big|_{t=0}
&= \mydet{A + t \expec{}{vv^*}}\Big|_{t=0} - \mydet{A}\trace{A^{-1}\expec{}{vv^*}}
\\&= \mydet{A} - \mydet{A}\trace{A^{-1}\expec{}{vv^*}}
\end{align*}
which is the same as (\ref{eq:jameslee}) by switching the order of summation in the expectation/trace.
\end{proof}

Let $v_{1}, \dots , v_{m}$ be independent random column vectors in $\Complex{d}$ with finite support.
For each $i$, let $A_{i} =  \expec{}{v_{i} v_{i}^{*}}$.
Then,
\begin{theorem}\label{thm:mixed1}
\begin{equation}\label{eqn:mixed1}
\expec{}{\charp{\sum_{i=1}^{m} v_{i} v_{i}^{*}}{x}}
= 
\left(\prod_{i=1}^{m} 1 - \partial_{z_{i}} \right) 
\mydet{x I + \sum_{i=1}^{m} z_{i} A_{i}}
\bigg|_{z_{1} = \dots = z_{m} = 0}.
\end{equation}
\end{theorem}
\begin{proof} 
For a positive definite matrix $M$, set 
\[
a_k(M) = \expec{}{\mydet{M - \sum_{i=1}^{k} v_{i} v_{i}^{*}}}
\]
and
\[
b_k(M)
=\left(\prod_{i=1}^{k} 1 - \partial_{z_{i}} \right) 
\mydet{M + \sum_{i=1}^{k} z_{i} A_{i}}
\bigg|_{z_{1} = \dots = z_{k} = 0}.
\]
We will prove by induction (on $k$) that $a_k(M) = b_k(M)$.
As the base case, we have 
\[
a_0(x) = \expec{}{\mydet{M}} = \mydet{M} = b_0(x)
\]
so we may assume $a_i(M) = b_i(M)$ for all $i < k$.
Now Corollary~\ref{cor:jameslee} implies
\begin{align*}
a_k(M) 
&= \expec{}{\mydet{M - \sum_{i=1}^{k} v_{i} v_{i}^{*}}}
\\&= \expec{v_1, \dots, v_{k-1}}{\expec{v_k}{\mydet{M - \sum_{i=1}^{k-1} v_{i} v_{i}^{*} - v_{k} v_{k}^{*}}}}
\\&= \expec{v_1, \dots, v_{k-1}}{(1 - \partial_{z_k})\mydet{M - \sum_{i=1}^{k-1} v_{i} v_{i}^{*} + z_k A_k}}\bigg|_{z_k = 0}
\end{align*}
and so by the inductive hypothesis (with $M' = M + z_k A_k$, which for $z_k$ sufficiently close to $0$ is still positive definite), we get
\begin{align*}
a_k(M) 
&= (1 - \partial_{z_k})\left(\prod_{i=1}^{k-1} 1 - \partial_{z_{i}} \right) \mydet{M' + \sum_{i=1}^{k-1} z_{i} A_{i} }\bigg|_{z_1=\dots = z_{k-1} = 0} \bigg|_{z_k = 0}
\\&=\left(\prod_{i=1}^{k} 1 - \partial_{z_{i}} \right) \mydet{M + \sum_{i=1}^{k} z_{i} A_{i} }\bigg|_{z_1=\dots = z_{k} = 0} 
\\&=b_k(M).
\end{align*}
Hence $a_k(M) = b_k(M)$ for all positive definite $M$.
In particular, $a_m(xI) = b_m(xI)$ for $x > 0$.
But $a_m(xI)$ and $b_m(xI)$ are finite degree polynomials, so equality on any interval implies equality everywhere.
\end{proof}

In particular, the above formula shows that the expected characteristic polynomial is a function of the
  covariance matrices $A_i$. 
We call this polynomial the \textit{mixed characteristic polynomial} of
  $A_{1}, \dots , A_{m}$, and denote it by $\mixed{A_{1}, \dots , A_{m}}{x}$.

To see that these polynomials are real-rooted, we draw on the theory of
multivariate real stable polynomials.

\subsubsection*{Stable Polynomials}
For a complex number $z$, let $\imag{z}$ denote its imaginary part.
We recall that a polynomial $p(z_{1}, \dots , z_{m}) \in \Complex{}[z_{1}, \dots , z_{m}]$
  is \textit{stable} if whenever $\imag{z_{i}} > 0$ for all $i$,
  $p(z_{1}, \dots , z_{m}) \not = 0$.
A polynomial $p$ is \textit{real stable} if it is stable and all of its coefficients are real.
A univariate polynomial is real stable if and only if it is real rooted (as defined at the beginning of Section~\ref{sec:interlacing}).

One of the classical theorems in this area gives a direct link between interlacing and real stability.
\begin{theorem}[Hermite-Biehler]\label{thm:HB}
Let $f$ and $g$ be polynomials with real coefficients.
Then $g + i f$ is stable if and only if $f$ and $g$ have all real roots and $f \ll g$.
\end{theorem}

To prove that the polynomials we construct in this paper are real stable,
  we begin with an observation of Borcea and \Branden~\cite[Proposition
  2.4]{BBjohnson}.
\begin{proposition}\label{pro:BBdet}
If $A_{1}, \dots , A_{m}$
  are positive semidefinite Hermitian matrices, then the polynomial
\[
  \mydet{\sum_{i} z_{i} A_{i}}
\]
is real stable.
\end{proposition}

We will generate new real stable polynomials from the one above by applying operators of the form
  $(1 - \partial_{z_{i}})$.
One can use general results, such as Theorem~1.3 of \cite{BBWeylAlgebra}
  or Proposition~2.2 of \cite{LiebSokal}, to prove that these operators
  preserve real stability.
It is also easy to prove it directly using the fact that the analogous operator
  on univariate polynomials preserves stability of polynomials with complex coefficients.
For example, the following theorem appears as Corollary 18.2a in Marden~\cite{Marden},
  and is similar to Corollary 5.4.1 of Rahman and Schmeisser \cite{rahmanSchmeisser}.

\begin{theorem}\label{thm:marden}
If all the zeros of a degree $d$ polynomial $q (z)$ lie in a (closed) circular region $A$,
  then for $\lambda \in \Complex{}$, all the zeros of
\[
  q (z) - \lambda q' (z)
\]
lie in the convex region swept out by translating $A$ in the magnitude and direction
  of the vector $d \lambda$.
\end{theorem}

\begin{corollary}\label{cor:partialRealStable}
If $p \in \R[z_{1}, \dots , z_{m}]$ is real stable,
  then so is
\[
  (1 - \lambda\partial_{z_{1}}) p (z_{1}, \dots , z_{m}).
\]
for any $\lambda \in \R$.
\end{corollary}
\begin{proof}
Let $x_{2}, \dots , x_{m}$ be numbers with positive imaginary part.
Then, the univariate polynomial
\[
  q (z_{1}) =  p (z_{1}, z_{2}, \dots , z_{m}) \big|_{z_{2} = x_{2}, \dots , z_{m} = x_{m}}
\]
is stable.
That is, all of its zeros lie in the circular region consisting of numbers with non-positive
  imaginary part.
As this region is invariant under translation by $d$, $(1-\lambda \partial_{z_{1}}) q (z)$
  is stable.
This implies that $(1- \lambda\partial_{z_{1}}) p$ has no roots in which all of the variables have positive
  imaginary part.
\end{proof}

We will also use the fact that real stability is preserved under setting
  variables to real numbers (see, for instance, \cite[Lemma 2.4(d)]{wagner}).
\begin{proposition} \label{prop:setreal}
If $p\in\R[z_1,\ldots,z_m]$ is real stable and $a\in \R$, then
$p|_{z_1=a}=p(a,z_2,\ldots,z_m)\in\R[z_2,\ldots,z_m]$ is real stable.
\end{proposition}

Now it is immediate from Proposition~\ref{pro:BBdet} and Corollary~\ref{cor:partialRealStable} 
  that the mixed characteristic polynomial is real rooted.

\begin{corollary}\label{cor:mixedStable}
The mixed characteristic polynomial of positive semidefinite Hermitian matrices
  is real rooted.
\end{corollary}
\begin{proof}
Proposition~\ref{pro:BBdet} tells us that 
\[
\mydet{xI+\sum_{i=1}^m z_{i} A_{i}}
\]
  is real stable.
Corollary~\ref{cor:partialRealStable} tells us that
\[
  \left(\prod_{i=1}^{m} 1 - \partial_{z_{i}} \right)
  \mydet{xI+\sum_{i=1}^{m} z_{i} A_{i}}
\]
is real stable as well.
Finally, Proposition \ref{prop:setreal} shows that setting all of the $z_i$ to
  zero preserves real stability.
As the resulting polynomial is univariate, it is real rooted.
\end{proof}

Finally, we use the real rootedness of mixed characteristic polynomials to 
show that every sequence of independent finitely supported random
vectors $v_1,\ldots, v_m$ defines an interlacing family.
For $i  \in [m]$, let $\ell_i$ be the size of the support of $v_i$, and let
\[
\prob{}{v_i = w_{i,j}} = p_{i,j}
\]
for $j = 1, \dots, \ell_i$.

For a vector $s \in [l_{1}] \times \dots \times [l_{m}]$, we define
\[
  q_{s}(x) =
  \left(\prod_{i=1}^{m} p_{i, s_i} \right)
  \charp{\sum_{i=1}^{m} w_{i,s_i} w_{i,s_i}^{*} }{x}.
\]
\begin{theorem}\label{thm:mixedInterlacing}
The polynomials $q_{s}$ form an interlacing family.
\end{theorem}

\begin{proof}
For $0 \leq k < m$ and $t \in [l_{1}] \times \dots \times [l_{k}]$, we will write the conditionally expected polynomials
\[
q_{t} (x)
 = 
\left(\prod_{i=1}^{k} p_{i,t_i} \right)
\expec{v_{k+1}, \dots , v_{m}}{\charp{\sum_{i=1}^{k} w_{i,t_i} w_{i,t_i}^{*} + \sum_{j=k+1}^{m} v_{j} v_{j}^{*} }{x}}.
\]
In particular,
\[
q_{\emptyset} (x) = \expec{v_{1}, \dots , v_{m}}{\charp{\sum_{j=1}^{m} v_{j} v_{j}^{*} }{x}}.
\]
is the expected characteristic polynomial of the random matrix appearing in
Theorem\ref{thm:general}.
For a given $t = t_1, \dots, t_k$ and a given $r \in \ell_{k+1}$ let $(t,r)$ denote the vector $t_1, \dots, t_k, r$.
In this language we need to prove that for every $t$, the polynomials
$\{ q_{(t,r)} (x) : r \in \ell_{k+1} \}$ have a common interlacing.
By Lemma~\ref{lem:Fisk}, it suffices to prove that for any choice of real numbers $\{ \alpha_r \}$ with $0 \leq \alpha_r \leq 1$ and $\sum_r \alpha_r = 1$, the polynomial
\begin{equation}\label{eqn:convex_combo}
\sum_{r \in \ell_{k+1}} \alpha_r q_{(t, r)} (x)
\end{equation}
is real-rooted. 
But notice that 
\begin{align*}
q_{t} (x)
&=  
\left(\prod_{i=1}^{k} p_{i,t_i} \right)
\expec{v_{k+1}, \dots , v_{m}}{\charp{\sum_{i=1}^{k} w_{i,t_i} w_{i,t_i}^{*} + \sum_{j=k+1}^{m} v_{j} v_{j}^{*} }{x}}.
\\&= 
\sum_{r \in \ell_{k+1}} \left(\prod_{i=1}^{k} p_{i,t_i} \right) p_{k+1, r}
\expec{v_{k+2}, \dots , v_{m}}{\charp{\sum_{i=1}^{k-1} w_{i,j} w_{i,j}^{*} + w_{k+1,r}w_{k+1,r}^{*}+ \sum_{i=k+2}^{m} v_{i} v_{i}^{*} }{x}}
\\&= 
\sum_{r \in \ell_{k+1}} p_{k+1, r} q_{(t, r)}(x)
\end{align*}
which (if we set $\alpha_r = p_{k+1,r}$) is precisely the polynomial in (\ref{eqn:convex_combo}).
Thus it suffices to show that $q_t$ is real-rooted (independent of the values of $p_{k+1, r}$).
Denoting $\expec{}{v_i v_i^*} = A_i$, we have that for $t = t_1, \dots, t_k$,
\[
q_t(x) = \left(\prod_{i=1}^{k} p_{i, t_i} \right)\mixed{w_{1, t_1}w_{1, t_1}^*, \dots, w_{k, t_k}w_{k, t_k}^*, A_{k+1}, \dots , A_{m}}{x},
\]
a multiple of a mixed characteristic polynomial.
But by Corollary~\ref{cor:mixedStable}, such a polynomial is real-rooted regardless of what $A_{k+1} = \expec{}{v_{k+1} v_{k+1}^*}$ is, and therefore is real-rooted independent of the distribution on $v_k$, as needed.
\end{proof}

\section{The Multivariate Barrier Argument}\label{sec:barrier}

Our goal in this section is to prove an upper bound on the roots of the mixed
 characteristic polynomial $\mixed{A_1, \dots, A_m}{x}$ as a function of the
 $A_i$, in the case of interest $\sum_{i=1}^mA_i = I$.
Our main theorem is:
\begin{theorem}\label{thm:mixedbound} 
Let $A_1,\ldots,A_m$ be positive semidefinite Hermitian matrices satisfying 
\[
 \sum_{i=1}^m A_i = I
 \AND
 \trace{A_i}\leq \epsilon
\]
for all $i$. 
Then the largest root of $\mixed{A_1,\ldots,A_m}{x}$ is at most $(1+\sqrt{\epsilon})^2$.
\end{theorem}

We begin by performing a simple but useful change of variables that will allow
  us to reason separately about the effect of each $A_i$ on
  the roots of $\mixed{A_1,\ldots,A_m}{x}$.
\begin{lemma}\label{lem:mixedAlt}
Let $A_{1}, \dots , A_{m}$ be Hermitian positive semidefinite matrices.
If $\sum_{i} A_{i} = I$,
 then
\begin{equation}\label{eqn:mixed2}
  \mixed{A_{1}, \dots , A_{m}}{x} = 
  \left(\prod_{i=1}^{m} 1 - \partial_{y_{i}} \right)
  \mydet{\sum_{i=1}^{m} y_{i} A_{i}}
  \Big|_{y_{1} = \dots = y_{m} = x}.
\end{equation}
\end{lemma}
\begin{proof}
For any differentiable function $f$, we have
\[
  \partial_{y_{i}} (f (y_{i})) \big|_{y_{i} = z_{i} + x}
 = 
  \partial_{z_{i}} f (z_{i} + x).
\]
So, the lemma follows by substituting
  $y_{i} = z_{i} + x$
  into the expression \eqref{eqn:mixed2},
  and observing that it produces the expression
  on the right hand side of \eqref{eqn:mixed1}.
\end{proof}

Let us write
\begin{equation}\label{eqn:plugq} \mixed{A_1,\ldots,A_m}{x} =
Q(x,x,\ldots,x),\end{equation}
 where $Q(y_1,\ldots,y_m)$ is the multivariate polynomial on the right hand side of
 \eqref{eqn:mixed2}.
The bound on the roots of $\mixed{A_1,\ldots,A_m}{x}$ will follow from a
 ``multivariate upper bound'' on the roots of $Q$, defined as follows.
\begin{definition} Let $p(z_1,\ldots,z_m)$ be a multivariate polynomial. We say
that $z\in\R^m$ is {\em above} the roots of $p$ if 
\[
p(z+t)>0\qquad\textrm{for all}\qquad t=(t_1,\ldots,t_m)\in\R^m, t_i\ge 0,
\]
i.e., if $p$ is positive on the nonnegative orthant with origin at $z$.
\end{definition}
We will denote the set of points which are above the roots of $p$ by
$\above{p}$ (for convenience, we will say that $\above{0} = \R^m$).
A simple lemma we will find useful is that the region above the roots of $p$ never shrinks under the operation of partial differentiation.
\begin{lemma}
For any real stable polynomial $p$, $\above{p} \subseteq \above{p_{z_i}}$.
\end{lemma}

To prove Theorem~\ref{thm:mixedbound}, it is sufficient by
\eqref{eqn:plugq} to show that
  $(1+\sqrt{\epsilon})^2\cdot\bvec{1}\in\above{Q}$, where $\bvec{1}$ is the all-ones vector.
We will achieve this by an inductive ``barrier function'' argument.
In particular, we will construct $Q$ iteratively via a sequence of operations of
  the form $(1-\partial_{y_i})$, and we will track the locations of the roots of
  the polynomials that arise in this process by studying the evolution of the 
  functions defined below.

Our procedure will be to transform the real stable polynomial
\[
p(z_1, \dots, z_n) =  \mydet{z_i \sum_i A_i}
\]
into 
\[
p_n(x) = \prod_i (1- \partial_i)   \mydet{z_i \sum_i A_i}
\]
iteratively, keeping track of what happens to the points above the roots of $p$.
At the beginning, the region above the roots will be the positive orthant (since the $A_i$ are all positive semidefinite).
At step $k$, we will performing the $(1-\partial_k)$ operation to the polynomial, which (for lack of a better analogy) one can think of as a hitting a metal cast of the ``above the roots'' region with a hammer.
In particular, it will do two things:
\begin{enumerate}
  \item Shift the entire region in the $z_k$ direction
  \item Cause the region to flatten inwards (in all directions)
\end{enumerate}
Both effects will cause the zeros of the polynomial to move away from the origin, but the goal will be to bound the amount of movement.
Our method of obtaining such a bound uses a collection of measurements that tell us how convex the polynomial is at a given point, in a given variable.
We call these measurements ``barrier functions'' and we will need one such measurement for each variable in $p$.

\begin{definition} Given a real stable polynomial $p$ and a point $z=(z_1,\ldots,z_m)\in\above{p}$, the {\em barrier function of $p$ in direction $i$ at $z$} is defined as
\[
\Phi^i_p(z) = \frac{\partial_{z_i}p(z)}{p(z)}.
\]
\end{definition}
\noindent Equivalently, we may define $\Phi^i_p$ as 
\begin{equation}\label{eqn:concdef}
\Phi^i_p(z_1,\ldots,z_m) := \frac{q_{z,i}'(z_i)}{q_{z,i}(z_i)} = \sum_{j=1}^r\frac{1}{z_i-\lambda_j},
\end{equation}
where the univariate restriction
\begin{equation}\label{eqn:restrictdef}
q_{z,i}(t) := p(z_1,\ldots,z_{i-1},t,z_{i+1},\ldots,z_m)
\end{equation}
has roots $\lambda_1,\ldots,\lambda_r$ (which are all real, by Proposition \ref{prop:setreal}).

Note that the barrier functions are (for general points) not particularly well behaved, but we will only be considering them on the set of points that are above the roots, where they have a number of nice properties that we will exploit.
Of course applying the $(1-\partial_k)$ operation will have an effect on the values of the barrier functions, and so we will need to mindful of this change as well.
One observation that will simplify this is that the effect of applying $(1-\partial_k)$ to the barrier function $\Phi^j$ can be calculated with all of the other variables (not $z_k$ or $z_j$) fixed.
Hence it suffices to understand the effects on bivariate polynomials, an example of which is shown in Figure~\ref{fig:bivariate}.

\begin{figure}[h]
\begin{center}
\begin{tikzpicture}
\node (pic) at (0, 0) {\includegraphics[scale=1.4, trim={15 15 0 0}, clip]{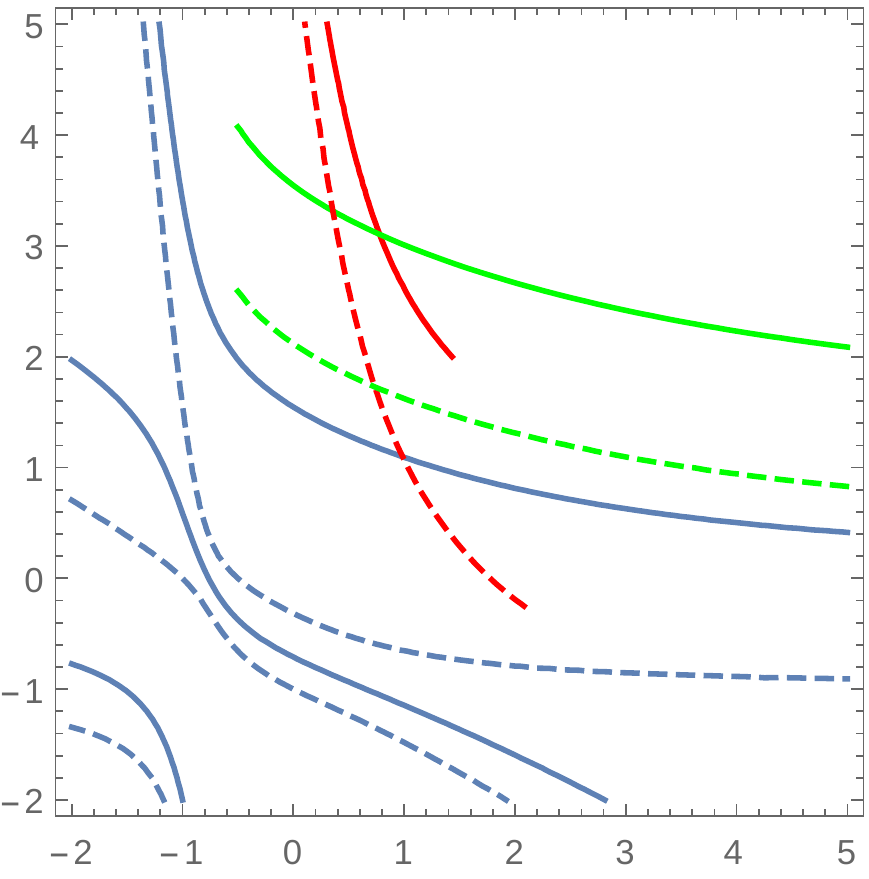}};
\node (legend) at (3.5, 3.5) {
  \def\arraystretch{1.25}%
  \begin{tabular}{rl}
   {\color{blue} - -} &  $p(x, y) = 0$  \\
   {\color{red} - -} & $\Phi^x_p(x, y) = 0.8$ \\
   {\color{green} - -} & $\Phi^y_p(x, y) = 0.8$ \\
   {\color{blue} ---} & $q(x, y) = 0$  \\
   {\color{red} ---} & $\Phi^x_q(x, y) = 0.8$\\
   {\color{green} ---} & $\Phi^y_q(x, y) = 0.8$
  \end{tabular}    
  \def\arraystretch{1.0}%
};
\end{tikzpicture}
\end{center}
\vspace{-0.75cm}
\caption{The effect of the $(1 - \partial_y)$ operator on the bivariate real stable polynomial
$p(x, y) = 4 + 12 x + 8 x^2 + 17 y + 29 x y + 8 x^2 y + 14 y^2 + 13 x y^2 + y^3$.  
Note $q(x, y) = (1 - \partial_y) p(x, y)$.
}
\label{fig:bivariate}

\end{figure}

Our proof of these will use an an observation of Terry Tao that uses a characterization of interlacing polynomials that appears in \cite{wagner}.

\begin{lemma}\label{lem:characterization}
Let $f$ and $g$ be real rooted polynomials with leading coefficient having the same sign such that $f \ll g$.
Then 
\[
(-1)^k \frac{\partial^k}{(\partial x)^k} \frac{f(x)}{g(x)} \bigg|_{x = y} \geq 0
\]
for all $y \in \above{g}$.
\end{lemma}
\begin{proof}
Let $\beta_1, \dots \beta_n$ be the roots of $g$.
Note that the equation
\[
f(x) = g(x) \left(s + \sum_i \frac{t_i}{x - \beta_i} \right)
\]
defines $n+1$ linearly independent equations in $n+1$ variables (one for each coefficient) and therefore has a solution.
Furthermore, one can check that each $t_i$ is nonnegative by noting that $f(\beta_i) = t_i g'(\beta_i)$ and using the fact that both $f(\beta_i)$ and $g'(\beta_i)$ alternate between nonpositive and nonnegative values (the first is due to the interlacing, the second is always true).
The result then follows by taking the derivatives and using the fact that $y - \beta_i > 0$ for all $i$.
\end{proof}

Using this, we can show the two analytic properties of barrier functions that we need: at any point above the roots of a real stable polynomial, the barrier functions are nonincreasing and convex in every coordinate.
\begin{lemma}\label{lem:monotone}
Suppose $p$ is real stable and
$z\in\above{p}$. 
Then for all $i,j \leq m$ and $\delta \geq 0$,
\begin{align}
\label{eqn:mono}\Phi^i_p(z+\delta e_j) &\leq \Phi^i_p(z), \text{ and} \qquad&\textrm{(monotonicity)} \\
\label{eqn:conv}\Phi^i_p(z+\delta e_j) &\leq \Phi^i_p(z) + \delta\cdot\partial_{z_j}\Phi^i_p(z+\delta e_j) 
&\qquad\textrm{(convexity).}
\end{align}
\end{lemma}
\begin{proof} 
If $i=j$ then consider the real-rooted univariate restriction $q(x_i) =
\prod_{k=1}^r(x_i - \lambda_k)$ defined in
\eqref{eqn:restrictdef}. 
Since $z\in\above{p}$ we know that $z_i>\lambda_k$ for all $k$. 
Monotonicity follows immediately by
considering each term in \eqref{eqn:concdef}, and convexity is easily
established by computing
\[
\partial^2_{x_i}\left(\frac{1}{x_i-\lambda_k}\right) \bigg|_{x=z}
\frac{2}{(z_i-\lambda_k)^3}
\]
which is positive since (for $z \in \above{p}$) $z_i >\lambda_k$.
For the case when $i \neq j$, we fix all variables other than $x_i$ and $x_j$ and consider the
bivariate restriction
\[
q(x_i, x_j) := p(z_1, \dots, x_i, \dots, x_j, \dots, z_m).
\]
Using Lemma~\ref{lem:characterization}, both monotonicity and convexity would follow by showing that $f \ll g$ where
\[
f(x_j) := \partial_{x_i} q(z_i, x_j)
\AND
g(x_j) := q(z_i, x_j)
\]
By Corollary~\ref{cor:partialRealStable}, $(1 + \lambda \partial_{x_i}) q$ is real stable (and so $g + \lambda f$ is real rooted) for all $\lambda$.
Hence by Theorem~\ref{thm:HKO} either $f \ll g$ or $g \ll f$.

To show that, in fact, $f \ll g$, we will consider the sum of the roots.
That is, it suffices to show that the sum of the roots of $f$ is at most the sum of the roots of $g$.
Write 
\[
q(x, y) = a(y)x^n + b(y)x^{n-1} + \dots
\]
Since taking partial derivatives preserves real stability, 
\[
\partial_{x}^{n-1} q(x,y) = (n-1)! (n x a(y) + b(y))
\]
is real stable.
Hence $b(y) + i a(y)$ is stable and so by Theorem~\ref{thm:HB}, we have $a \ll b$.
Using Lemma~\ref{lem:characterization} again, this implies $a(y) b'(y) -  b(y) a'(y) \geq 0$.

Now since $(x, y)$ is above the roots of $q$, it is also above the roots of $\partial_{x}^{n} q(x,y) = n! a(y)$ and so $a'(y)$ and $a(y)$ have the same sign.
Hence
\begin{equation}\label{eqn:sums}
\frac{b'(y)}{a'(y)} \geq  \frac{b(y)}{a(y)}
\end{equation}
Note that the sum of the roots of $g$ is $-b/a$ and the sum of the roots of $f$ is $-b'/a'$.
Thus (\ref{eqn:sums}) is asserting that the sum of the roots of $f$ is at most the sum of the roots of $g$ (as needed).
\end{proof}

\begin{remark} \label{rmk:renegar}
Our original proof of monotonicity and convexity used a powerful characterization of bivariate real stable polynomials due to Helton and Vinnikov~\cite{HeltonVinnikov} and Lewis, Parrilo and Ramana~\cite{lax}.
While this characterization is extremely useful, it (incorrectly) gave the the impression that such a powerful result was required to prove Lemma~\ref{lem:monotone}.
James Renegar, in particular, pointed out that the lemma follows directly from well-known properties of hyperbolic polynomials.
We chose the proof given here since it has the benefit of remaining in the domain of real stable polynomials.
\end{remark}

Our first observation is that when a point above the roots has a small enough boundary function in a given direction, it remains above the roots after applying an operator in that direction.
Pictorially, this asserts that the dotted green line in Figure~\ref{fig:bivariate} will always be contained inside the solid blue line.

\begin{lemma}\label{lem:above} 
Let $p$ be a real stable polynomial, and let $z$ be a point above the roots of $p$ which satisfies $\Phi_p^i(z)<1$. Then $z$ is also above the roots of $p-\partial_{z_i}p$.
\end{lemma}
\begin{proof}
Let $t$ be a nonnegative vector. As $\Phi$ is nonincreasing in
each coordinate we have $\Phi_p^i(z+t)<1$, whence
\[
\partial_{z_i}p(z+t)<p(z+t) \implies (p-\partial_{z_i}p)(x+t)>0,
\]
as desired.
\end{proof}

While Lemma~\ref{lem:above} proves what we need for a single iteration, it is not strong enough for an inductive argument because the application of a $(1-\partial_{z_k})$ operator will typically cause all of the barrier functions to increase.
As previously mentioned, the effect of the $(1-\partial_{z_k})$ operator will be to shift in the $z_k$ direction and flatten away from the origin.
To remedy this, we will translate our upper bounds in the $z_k$ direction as well (see Figure~\ref{fig:move}).
Certainly this will compensate for the shift in the $z_k$ direction, but we will need to move extra in order to compensate for the flattening of the region.
How much extra will be determined by the value of the barrier function in that direction.
In particular, by exploiting the convexity properties of the $\Phi_p^i$, we arrive at the following useful strengthening of Lemma~\ref{lem:above}.
\begin{lemma}\label{lem:barrier}
Suppose $p(z_1,\ldots,z_m)$ is real stable with $z\in\above{p}$, and $\delta>0$ satisfies
\begin{equation}\label{eqn:deltaCond}
\Phi^j_p(z) \leq 1 - \frac{1}{\delta}.
\end{equation}
Then for all $i$, 
\[
\Phi^i_{p-\partial_{z_j}p}(z+\delta e_j)\leq\Phi^i_p(z).
\]
\end{lemma}
The proof follows directly from property \eqref{eqn:conv} of Lemma~\ref{lem:monotone}.
We refer the reader to \cite{IF2} for the details.
The effect of Lemma~\ref{lem:barrier} can be seen in Figure~\ref{fig:move}.
By moving far enough in the $y$ direction, the given point is able to move back inside the regions defined by the $\Phi^x$ and $\Phi^y$ functions.

\begin{figure}[h]
\begin{center}
\begin{tikzpicture}
\node (pic) at (0, 0) {\includegraphics[scale=1.0, trim={15 15 0 0}, clip]{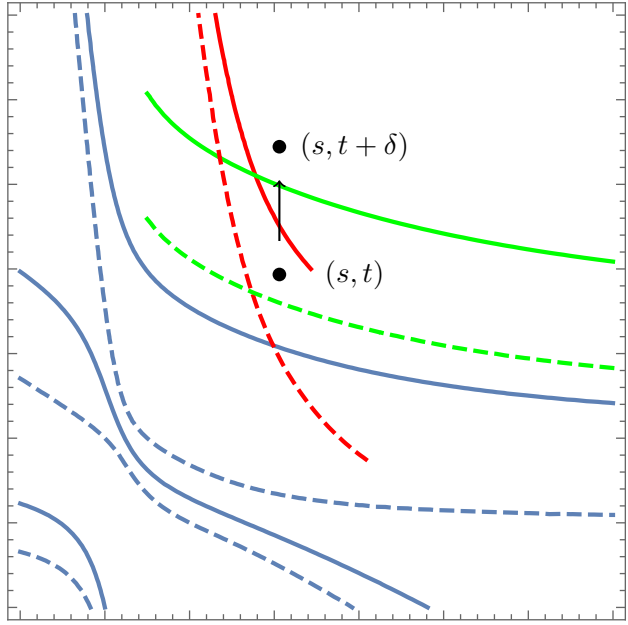}};

\node[circle, fill, scale=0.5] (x)  at (-0.5, 0.5) {};
\node[circle, fill, scale=0.5] (y) at (-0.5, 2.2) {};
\node [right of=y] (yl) {$(s, t + \delta)$ };
\node [right of=x] (xl) {$(s, t)$ };

\draw[shorten >=10pt,shorten <=10pt,->, thick] (x) -- (y);
\end{tikzpicture}
\end{center}
\vspace{-0.75cm}
\caption{Moving $\delta = \frac{1}{1-\Phi^y_p(s, t)}$ in the $y$ direction moves the point back inside the region defined by the two barrier functions.}
\label{fig:move}
\end{figure}

It should now be clear how the proof proceeds --- at each step, we will apply the operator $(1-\partial_k)$ and then move our upper bound in that direction.  
We will bound the amount we move using the barrier function in that direction, while also taking care that we have moved far enough to cause the barrier functions in all of the other directions to go down (so that they will still be small when the time comes to use them).

\begin{proof}[Proof of Theorem~\ref{thm:mixedbound}]
Let 
\[
  P (y_{1}, \dots , y_{m}) = \mydet{\sum_{i=1}^{m} y_{i} A_{i}}.
\]
Set $t = \sqrt{\epsilon} + \epsilon$.
As all of the matrices $A_{i}$ are positive semidefinite
  and
\[
  \mydet{t \sum_{i} A_{i}} = \mydet{t I} > 0,
\]
the vector $t \bvec{1}$ is above the roots of $P$.

By Theorem~\ref{thm:jacobi},
\[
  \Phi^{i}_{P} (y_{1}, \dots , y_{m})
=
  \frac{\partial_{i} P (y_{1}, \dots , y_{m})}{P (y_{1}, \dots , y_{m})}
=
\trace{\left(\sum_{i=1}^{m} y_{i} A_{i} \right)^{-1} A_{i}}.
\]
So,
\[
  \Phi^{i}_{P} (t \bvec{1})
=
  \trace{A_{i}} / t
\leq 
  \epsilon / t
= 
  \epsilon / (\epsilon + \sqrt{\epsilon}),
\]
which we define to be $\phi$.
Set
\[
  \delta = 1/ (1-\phi) = 1 + \sqrt{\epsilon }.
\]

For $k \in [m]$, define
\[
  P_{k} (y_{1}, \dots , y_{m})
=
  \left(\prod_{i=1}^{k} 1 - \partial_{y_{i}} \right) P (y_{1}, \dots , y_{m}).
\]
Note that $P_{m} = Q$.

Set $x^{0}$ to be the all-$t$ vector, and 
  for $k \in [m]$ define
  $x^{k}$ to be the vector that is $t+\delta$
 in the first $k$ coordinates and $t$ in the rest.
By inductively applying Lemmas~\ref{lem:above} and \ref{lem:barrier},
  we prove that $x^{k}$ is above the roots of $P_{k}$,
  and that for all $i$
\[
   \Phi^{i}_{P_{k}} (x^{k}) \leq \phi .
\]

It follows that the largest root of
\[
  \mixed{A_{1}, \dots , A_{m}}{x}
= 
  P_{m} (x, \dots , x)
\]
is at most 
\[
t + \delta =
  1 + \sqrt{\epsilon} + \sqrt{\epsilon} +  \epsilon  = (1+\sqrt{\epsilon})^{2}.
\]
\end{proof}

\section{Theorems}\label{sec:theorems}

We now combine the results in the previous sections to prove
Conjecture~\ref{conj:weaver}, thereby proving Conjecture \ref{conj:fpc} and
showing that the Kadison-Singer Problem has a positive solution.

We first complete the proof of Theorem \ref{thm:general}, restated here for
convenience.
\begin{theorem}[Theorem \ref{thm:general}]
Let $\epsilon > 0$ and let
  $v_1, \dots, v_m$ be independent random vectors in $\C^d$ with finite support such that 
\begin{equation}\label{eqn:mainSum2}
\sum_{i=1}^m \expec{}{v_{i} v_{i}^{*}} = I_d,
\end{equation}
and
\[
\expec{}{ \norm{v_{i}}^{2}} \leq \epsilon, \text{for all $i$.}
\]
Then 
\[
\prob{}{ \norm{\sum_{i=1}^m v_i v_i^{*}} \leq (1 + \sqrt{\epsilon})^2 } > 0
\]
\end{theorem}
\begin{proof}
Let $A_{i} = \expec{}{v_{i} v_{i}^{*}}$.
We have 
\[
\trace{A_{i}} = \expec{}{\trace{v_{i} v_{i}^{*}}}
  = \expec{}{v_{i}^{*} v_{i}} = \expec{}{\norm{v_{i}}^{2}} \leq \epsilon ,
\]
for all $i$.

The expected characteristic polynomial of the $\sum_{i} v_{i} v_{i}^{*}$
  is the mixed characteristic polynomial $\mixed{A_{1}, \dots , A_{m}}{x}$.
Theorem~\ref{thm:mixedbound} implies that the largest root of this polynomial 
  is at most $(1+\sqrt{\epsilon})^{2}$.

For $i \in [m]$, let $l_{i}$ be the size of the support of the random vector $v_{i}$,
  and let
  $v_{i}$ take the values $w_{i,1}, \dots , w_{i,l_{i}}$ 
  with probabilities
  $p_{i,1}, \dots , p_{i,l_{i}}$.
Theorem~\ref{thm:mixedInterlacing} tells us that the polynomials
  $q_{j_{1}, \dots , j_{m}}$ are an interlacing family.
So, Theorem~\ref{thm:interlacing} implies that there exist
  $j_{1}, \dots , j_{m}$ so that
  the largest root of the characteristic polynomial of
\[
  \sum_{i=1}^{m} w_{i,j_{i}} w_{i,j_{i}}^{*}
\]
is at most $(1 + \sqrt{\epsilon})^{2}$.
\end{proof}

It is worth noting here that the bound $(1 + \sqrt{\epsilon})^2$ is asymptotically tight, as can be seen by picking random vectors so that 
\[
  \expec{}{v_iv_i^*} = \frac{1}{n}I
\]
for all $i$.
The resulting polynomial is an associated Laguerre polynomial whose largest root is (asymptotically) exactly this bound.
We believe this polynomial is actually the extremal polynomial for this problem, but are unable to prove it.

Since the outer products of the vectors sum to the identity, the best one could hope for is to be able to split the vectors into $r$ groups such that each was {\em exactly} $(1/r)I$.
Hence Theorem~\ref{thm:partition} guarantees that for any vectors $v_i$, one can get within a factor of $\left(1+\sqrt{r \delta}\right)^{2}$ of the best one could get with the best possible $v_i$.

\begin{theorem}
Let $r > 0$ be an integer, and let $u_1, \dots, u_m \in \C^d$ be vectors such that 
\[
  \sum_{i=1}^m \expec{}{u_{i} u_{i}^{*}} = I_d
  \AND
\norm{u_i}^2 \leq \delta \text{ for all $i$.}
\]
Then there exists a partition $\{ A_1, \dots A_r \}$ of $[m]$ such that 
\[
\norm{ \sum_{i \in A_j} {u_i} {u_i}^{*}} \leq \frac{1}{r}\left(1+\sqrt{r \delta}\right)^{2}
\]
\end{theorem}
\begin{proof}
For each $i \in [m]$ and $k \in [r]$, define $w_{i,k} \in \C^{rd}$ to be the direct sum of $r$ vectors from $\C^d$, all of which are $0^d$ (the $0$-vector in $\C^d$) except for the $k^{th}$ one (which is a copy of $u_i$).
Now let $v_1, \dots, v_m$ be independent random vectors such that $v_i$ takes the values $\{ \sqrt{r} w_{i,k} \}_{k=1}^r$ each with probability $1/r$.

These vectors satisfy
\[
  \expec{}{v_{i} v_{i}^{*}}
= 
\begin{pmatrix}
u_{i} u_{i}^{*} & 0_{d \times d} & \dots & 0_{d \times d} \\
0_{d \times d} & u_{i} u_{i}^{*} & \dots & 0_{d \times d} \\
\vdots &  & \ddots &  \vdots \\
0_{d \times d} & 0_{d \times d} & \dots & u_{i} u_{i}^{*},
\end{pmatrix}
\quad \text{and} \quad 
\norm{v_{i}}^{2} = r \norm{u_{i}}^{2} \leq r \delta.
\]
So,
\[
\sum_{i=1}^m  \expec{}{v_{i} v_{i}^{*}} = I_{rd}
\]
and we can apply Theorem~\ref{thm:general} with $\epsilon  = r \delta$ to show that 
  there exists an assignment of each $v_i$ so that  
\[
\norm{
\sum_{k=1}^r
\sum_{i : v_i = w_{i,k}}
\left(\sqrt{r} w_{i,k} \right)
\left(\sqrt{r} w_{i,k} \right)^{*}
}
\leq (1+\sqrt{r \delta})^{2}.
\]
Setting $A_{k} = \{ i : v_i = w_{i,k} \}$ implies that 
\[
\norm{\sum_{i \in A_k} {u_i} {u_i}^{*}} 
= 
\norm{\sum_{i \in A_k} {w_{i,k}} {w_{i,k}}^{*}} 
\leq  
\frac{1}{r}
\norm{
\sum_{k=1}^r
\sum_{i : v_i = w_{i,k}}
\left(\sqrt{r} w_{i,k} \right)
\left(\sqrt{r} w_{i,k}\right)^{*}
}
\leq \left(\frac{1}{\sqrt{r}}+\sqrt{\delta}\right)^{2}.
\]
and this is true for all $k$.
\end{proof}

Note that the bound in Theorem~\ref{thm:partition} is actually quite good.
Since the outer products of the vectors sum to the identity, the best one could hope for is to be able to split the vectors into $r$ groups such that each was {\em exactly} $(1/r)I$.
Hence Theorem~\ref{thm:partition} guarantees that for any vectors $v_i$, one can get within a factor of $\left(1+\sqrt{r \delta}\right)^{2}$ of the best one could get with the best possible $v_i$.

\section{Extensions}\label{sec:extensions}
There are multiple issues that occur when attempting to apply
Theorem~\ref{thm:general} in the context of more general matrices.
Two of these issues have since been resolved, and we list the resulting theorems here without proof.
The first is a result due to Michael Cohen \cite{cohen} that allows one to get bounds in situations where the original matrices are not rank 1 \cite{cohen}.

\begin{theorem}\label{thm:cohen}
Let $\epsilon > 0$ and let $A_1, \dots, A_m$ be independent random positive semidefinite matrices in $\C^{d \ times d}$ with finite support such that 
\[
\sum_{i=1}^m \expec{}{A_i} = I_d
\AND
\expec{}{ \trace{A_i} } \leq \epsilon, \text{for all $i$.}
\]
Then 
\[
\prob{}{ \norm{\sum_{i=1}^m A_i} \leq (1 + \sqrt{\epsilon})^2 } > 0.
\]
\end{theorem}
This then leads to a similar generalization of Theorem~\ref{thm:partition}.
The issue with trying to apply the original proof in the high rank setting is
that the expected characteristic polynomial $\E\det(xI-\sum_{i} A_i)$ need not
be real-rooted for random high-rank matrices $A$ --- the simplest example is
just to take $2\times 2$ $A_i$ equal to $I$ and $-I$ with equal probability,
yielding the expected polynomial:
$$ (x-1)^2+(x+1)^2,$$
which is strictly positive on the real line. Cohen's idea is to replace the
characteristic polynomial $\chi(\sum_{i}A_i)$ by the {\em mixed characteristic polynomial}:
$$\mu(A_1,\ldots,A_m):=\E\det(\sum_{i=1}^m B_i)$$
where the $B_i$ are any rank one random matrices such that $\E B_i(=A_i)$. This polynomial is necessarily real-rooted by Corollary
\ref{cor:mixedStable}, and by construction multilinear in the $A_i$, so the
same argument as in the rank-1 case shows that there exist $(A_1,\ldots,A_m$
such that the largest root of $\mu(A_1,\ldots,A_m)$ is at most the largest root
of $\E \mu(A_1,\ldots,A_m)$. The latter polynomial is just $\E\chi(xI-\sum_i
B_i)$, so its largest root is at most $(1+\sqrt{\epsilon})^2$ where
$\epsilon=\E\tr(B_i)=\E\tr(A_i)$. The main technical content of Cohen's result,
which is a kind of convexity result, is that 
$$\lambda_{max}\chi(\sum_{i\le m}A_i)\le \lambda_{max}\mu(A_1,\ldots,A_m)$$
for any positive semidefinite $A_i$. Combining this with the previous result
gives Theorem \ref{thm:cohen}.

It should be noted that this extension only works when one wishes to find a
polynomial whose {\em largest root} is {\em small}.  In general, the method of
interlacing polynomials will supply a bound on either side of any chosen root
(for example, to find a polynomial whose smallest root is large), but we
currently can only get such a bound in the rank 1 case.  Fortunately, the
majority of applications seem to involve the largest root, so for many of the
known results, the extension can typically be applied without issue.

One reaction to the high rank extension is to think that now one might be able
to find a partition of the type in Theorem~\ref{thm:partition} with some number
of added constraints (that one can impose combinatorially using added dimensions
orthogonal to the original vectors).  This, unfortunately, does not appear to be
as useful as one might think, since the addition will cause the expected trace
of the matrices to go up, thereby decreasing the accuracy of the bound.  This
suggests that a similar extension would not hold if one was to find an analogue
of Theorem~\ref{thm:general} for indefinite self adjoint matrices (which would
be interesting in its own right).  This does suggest as an interesting open
question whether one can find a bound analagous to Theorem~\ref{thm:general}
when the restriction on the matrices is expressed in some other (for example,
Schatten) norm.  

The other issue is with the constraint (\ref{eqn:mainSum2}).  This has been dealt
with in a paper of Akemann and Weaver \cite{akemann_weaver} that shows (among
other things) the following extension of Theorem~\ref{thm:partition}.

\begin{theorem}\label{thm:partition2}
Let $r > 0$ be an integer, and let $u_1, \dots, u_m \in \C^d$ be vectors such that 
\[
  \sum_{i=1}^m \expec{}{u_{i} u_{i}^{*}} \leq I_d
  \AND
\norm{u_i}^2 \leq \delta \text{ for all $i$.}
\]
Now let $t_1, \dots, t_n$ satisfy $0 \leq t_i \leq 1$.
Then there exists a partition $\{ A_1, \dots A_r \}$ of $[m]$ such that 
\[
\norm{ \sum_{i \in A_j} {u_i} {u_i}^{*} - \sum_{i} t_i {u_i} {u_i}^{*}} \leq O(\delta^{1/8}).
\]
\end{theorem}

Their method of proof proceeds by proving a weighted version of Theorem~\ref{thm:general} and then giving a series of successive approximations (see \cite{akemann_weaver} for details).

Lastly we mention that even though Theorem~\ref{thm:general} is asymptotically tight, further restrictions on the random vectors can lead to improved bounds.
This leads to slight improvements in the guarantee of Theorem~\ref{thm:partition} in cases where the partition size is small.
Looking at the proof of Theorem~\ref{thm:partition}, there is a direct correspondence between the number of partitions and the rank of the expected matrix that is constructed.
This in turn corresponds to the degree of the variable $z_i$ in the polynomial
\[
  \mydet{xI - \sum_i z_i A_i}.
\]
This degree restriction can sometimes lead to tighter bounds on the convexity of the barrier functions.
For example, the following improvement of Lemma~\ref{lem:barrier} was shown in \cite{improved_ks2}:
\begin{lemma}\label{lem:phi}
Assume $p(x, y)$ is quadratic in $x$ and let
\[
\Phi_p^x \leq \left(1 - \frac{1}{\delta} \right) \frac{1}{2 - \delta}
\]
for some $\delta \in (1, 2)$.
Now let $q(x, y) = (1 - \partial_x) p(x + \delta, y)$ and assume that $(x_0, y_0)$ is above the roots of \textbf{both} $p$ and $q$.
Then
\[
\Phi_{q}^y \leq \Phi_{p}^y.
\]
\end{lemma}

Using this, the authors then go on to show an improvement of Theorem~\ref{thm:partition} when $r = 2$ and $\delta < 1/2$:
\begin{theorem}\label{thm:partition3}
Let $u_1, \dots, u_m \in \C^d$ be vectors such that 
\[
  \sum_{i=1}^m \expec{}{u_{i} u_{i}^{*}} = I_d
  \AND
\norm{u_i}^2 \leq \delta \leq \frac{1}{2} \text{ for all $i$.}
\]
Then there exists a partition $\{ A_1, A_2 \}$ of $[m]$ such that 
\[
\norm{ \sum_{i \in A_j} {u_i} {u_i}^{*}} \leq \frac{1}{2} + \sqrt{\delta}\sqrt{1-\delta}.
\]
\end{theorem}

The argument in \cite{improved_ks2} follows the same general pattern as the one
used in our proof Theorem~\ref{thm:mixedbound}, but has a number of added
issues.  One of the nicer occurrences in the proof of
Theorem~\ref{thm:mixedbound} is that the the conditions necessary for
Lemma~\ref{lem:barrier} were strictly stronger than the conditions necessary for
Lemma~\ref{lem:above}.  As a result, the constraint provided by
Lemma~\ref{lem:barrier} is the only relevant one in determining the optimal values
of $\delta$ and $t$.  With the improved version of Lemma~\ref{lem:phi}, this is
no longer the case and so one must balance two competing constraints.  In the
end, Theorem~\ref{thm:partition3} only gives a slight improvement over the value
\[
  \frac{1}{2} + \sqrt{2\delta} + \delta
\]
that one gets directly from Theorem~\ref{thm:partition}.

\bibliographystyle{alpha}
\bibliography{CDM}

\begin{thebibliography}{BCMS16}

\bibitem[AW14]{akemann_weaver}
Charles Akemann and Nik Weaver.
\newblock A {L}yapunov-type theorem from {Kadison--Singer}.
\newblock {\em Bulletin of the London Mathematical Society}, page bdu005, 2014.

\bibitem[Bar05]{barvinok2005math}
Alexander Barvinok.
\newblock Math 710: Measure concentration.
\newblock {\em Lecture notes}, 2005.

\bibitem[BB08]{BBjohnson}
Julius Borcea and Petter Br{\"a}nd{\'e}n.
\newblock Applications of stable polynomials to mixed determinants: {Johnson's}
  conjectures, unimodality, and symmetrized {Fischer} products.
\newblock {\em Duke Mathematical Journal}, 143(2):205--223, 2008.

\bibitem[BB10]{BBWeylAlgebra}
Julius Borcea and Petter Br\"{a}nd\'{e}n.
\newblock Multivariate {Polya-Schur} classification problems in the {Weyl}
  algebra.
\newblock {\em Proceedings of the London Mathematical Society}, (3)
  101(1):73--104, 2010.

\bibitem[BCMS16]{improved_ks2}
Marcin Bownik, Pete Casazza, Adam~W. Marcus, and Darrin Speegle.
\newblock Improved bounds in {W}eaver and {F}eichtinger conjectures.
\newblock Journal f\"{u}r die reine und angewandte Mathematik (Crelles
  Journal). ISSN (Online) 1435-5345, ISSN (Print) 0075-4102, DOI:
  10.1515/crelle-2016-0032, August 2016, 2016.

\bibitem[BSS12]{BSS}
Joshua Batson, Daniel~A Spielman, and Nikhil Srivastava.
\newblock Twice-{Ramanujan} sparsifiers.
\newblock {\em SIAM Journal on Computing}, 41(6):1704--1721, 2012.

\bibitem[CEKP07]{CEKP07}
Pete Casazza, Dan Edidin, Deepti Kalra, and Vern~I. Paulsen.
\newblock Projections and the {Kadison--Singer} problem.
\newblock {\em Operators and Matrices}, 1(3):391–408, 2007.

\bibitem[Coh]{cohen}
Michael~B. Cohen.
\newblock Improved spectral sparsification and {Kadison--Singer} for sums of
  higher-rank matrices.
\newblock
  www.birs.ca/events/2016/5-day-workshops/16w5111/videos/watch/201608011534-Cohen.html.

\bibitem[CS07]{ChudnovskySeymour}
Maria Chudnovsky and Paul Seymour.
\newblock The roots of the independence polynomial of a clawfree graph.
\newblock {\em Journal of Combinatorial Theory, Series B}, 97(3):350--357,
  2007.

\bibitem[Ded92]{Dedieu}
Jean~Pierre Dedieu.
\newblock Obreschkoff's theorem revisited: what convex sets are contained in
  the set of hyperbolic polynomials?
\newblock {\em Journal of Pure and Applied Algebra}, 81(3):269--278, 1992.

\bibitem[Fel80]{Fell}
H.~J. Fell.
\newblock Zeros of convex combinations of polynomials.
\newblock {\em Pacific J. Math.}, 89(1):43--50, 1980.

\bibitem[HV07]{HeltonVinnikov}
J~William Helton and Victor Vinnikov.
\newblock Linear matrix inequality representation of sets.
\newblock {\em Communications on pure and applied mathematics}, 60(5):654--674,
  2007.

\bibitem[LPR05]{lax}
Adrian Lewis, Pablo Parrilo, and Motakuri Ramana.
\newblock The {Lax} conjecture is true.
\newblock {\em Proceedings of the American Mathematical Society},
  133(9):2495--2499, 2005.

\bibitem[LPS88]{LPS}
A.~Lubotzky, R.~Phillips, and P.~Sarnak.
\newblock Ramanujan graphs.
\newblock {\em Combinatorica}, 8(3):261--277, 1988.

\bibitem[LS81]{LiebSokal}
Elliott~H. Lieb and Alan~D. Sokal.
\newblock A general {Lee-Yang} theorem for one-component and multicomponent
  ferromagnets.
\newblock {\em Communications in Mathematical Physics}, 80(2):153--179, 1981.

\bibitem[Mar85]{Marden}
Morris Marden.
\newblock {\em Geometry of polynomials}, volume~3.
\newblock American Mathematical Soc., 1985.

\bibitem[MSS14]{MSSICM}
Adam~W. Marcus, Daniel~A. Spielman, and Nikhil Srivastava.
\newblock Ramanujan graphs and the solution of the {K}adison-{S}inger problem.
\newblock {\em Proc. ICM}, 3:375--386, 2014.

\bibitem[MSS15a]{IF1}
Adam~W. Marcus, Daniel~A. Spielman, and Nikhil Srivastava.
\newblock Interlacing families {I}: Bipartite {Ramanujan} graphs of all
  degrees.
\newblock {\em Annals of Mathematics}, 182(1):307--325, 2015.

\bibitem[MSS15b]{IF2}
Adam~W. Marcus, Daniel~A. Spielman, and Nikhil Srivastava.
\newblock Interlacing families {II}: Mixed characteristic polynomials and the
  {K}adison--{S}inger problem.
\newblock {\em Annals of Mathematics}, 182(1):327--350, 2015.

\bibitem[Nao11]{naor2011sparse}
Assaf Naor.
\newblock Sparse quadratic forms and their geometric applications (after
  batson, spielman and srivastava).
\newblock {\em arXiv preprint arXiv:1101.4324}, 2011.

\bibitem[RS02]{rahmanSchmeisser}
Qazi~Ibadur Rahman and Gerhard Schmeisser.
\newblock {\em Analytic theory of polynomials}, volume~26.
\newblock Oxford University Press, 2002.

\bibitem[Tro12]{tropp2012user}
Joel~A Tropp.
\newblock User-friendly tail bounds for sums of random matrices.
\newblock {\em Foundations of computational mathematics}, 12(4):389--434, 2012.

\bibitem[Ver10]{vershynin2010introduction}
Roman Vershynin.
\newblock Introduction to the non-asymptotic analysis of random matrices.
\newblock {\em arXiv preprint arXiv:1011.3027}, 2010.

\bibitem[Wag11]{wagner}
David~G. Wagner.
\newblock Multivariate stable polynomials: theory and applications.
\newblock {\em Bulletin of the American Mathematical Society}, 48(1):53--84,
  2011.

\bibitem[Wea04]{weaver}
Nik Weaver.
\newblock The {K}adison--{S}inger problem in discrepancy theory.
\newblock {\em Discrete Mathematics}, 278(1--3):227--239, 2004.

\end{thebibliography}

\end{document}